\newtheorem{theo}{Theorem}[section]
\newtheorem{cor}{Corollary}
\newtheorem{lem}[theo]{Lemma}
\theoremstyle{definition}
\newtheorem{definition}[theo]{Definition}
\newtheorem{remark}{Remark}
\newcommand{\eps}[1]{{#1}_{\varepsilon}}
\def\O{{\Omega}}
\def\nl{{\nabla^{l}}}
\def\nr{{\nabla^{r}}}
\def\o{{\omega}}
\def\eps{{\epsilon}}
\def\G{{\Gamma}}
\def\p{{\mathcal{P}}}
\def\f{{\mathcal{F}}}
\def\L{{\mathcal{L}}}
\def\grad{\nabla}
\def\R{{\mathbb{R}}}
\def\Z{{\mathbb{Z}}}
\def\N{{\mathbb{N}}}
\def\P{{\mathbb{P}}}
\def\E{{\mathbb{E}}}
\newcommand{\dem}[1]{\vskip 0.2\baselineskip \noindent {\bf{#1}}\vskip 0.2\baselineskip }
\newcommand{\fdem}{\vskip 0.2 pt \qquad \qquad \qquad \qquad \qquad \qquad \qquad \qquad \qquad \qquad \qquad \qquad \qquad \qquad \qquad \qquad  $\square$  }
\def\tilde{\widetilde}
\thanks{The research was partially supported by the DFG Forschergruppe 718 "Stochastics and Analysis in Complex Physical Systems."}
\begin{document}


\title[Random Coefficients]
{Non-existence of positive stationary  solutions for a class of semi-linear PDEs with random coefficients}

\author{J. Coville, N. Dirr and S. Luckhaus}

\keywords{Qualitative behavior of parabolic PDEs with random coefficients, 
Random obstacles, Interface Evolution in Random media} 
\subjclass{35R60, 35B09, 82C44}

\email{{\tt jerome.coville@avignon.inra.fr}}
\email{ {\tt N.Dirr@maths.bath.ac.uk}}
\email{{\tt luckhaus@mis.mpg.de}}


\maketitle
\centerline{\scshape   J\'er\^ome Coville}
{\footnotesize 
\centerline{Equipe BIOSP,
INRA Avignon}
\centerline{Domaine Saint Paul, Site Agroparc }
\centerline{84914 Avignon cedex 9, France }
}\medskip
\centerline{\scshape Nicolas Dirr}
{\footnotesize \centerline{Department of Mathematical Sciences, 
University of Bath,}
 \centerline{ Bath, BA2 7AY, United Kingdom.}
}\medskip
\centerline{\scshape Stephan Luckhaus}
{\footnotesize
\centerline{Mathematisches Institut der Universit\"at Leipzig,}
\centerline{PF 100920,
Leipzig, Germany} 
}


\begin{abstract}
We consider a so-called random 
obstacle model for the motion of a hypersurface through a field of 
random obstacles,
driven by a constant driving field. 
The resulting semi-linear parabolic PDE with random coefficients does not
admit a global nonnegative stationary solution, 
which implies that an interface that was flat originally cannot get
stationary. 
The absence of global stationary solutions is shown by 
proving lower bounds on the growth of stationary solutions on 
large domains with Dirichlet boundary conditions. 
Difficulties arise because the random
lower order part of the equation cannot be bounded uniformly.
\end{abstract}

\section{Introduction}  

We are interested in the behavior of a moving interface $\G$  in a random medium, where $\G$ is a graph,  
i.e. defined as
\begin{equation}\label{cdl.interface}
\Gamma(t):=\{ (x,y)\in \R^{2}: y=u(x,t)\}
\end{equation}
and
the function $u$ evolves according to the following equation:
\begin{align}
&\frac{\partial u}{\partial t}= u_{xx}(x,t) + f(x,u(x,t)) +F \quad \text{ in } \quad \R\times \R^{+}, \label{cdl.eq.interface}\\
&u(x,0)=0\label{cdl.eq.initial}
\end{align} 
where $f \in C^1(\R^{2}\times \O)$ is a random field  representing  the  random medium 
and will be defined more precisely later on. Note that $f$ is not restricted to be either positive or negative.
$F$ is a positive constant called "driving field."
The objective is to prove that the solution of \eqref{cdl.eq.interface}-\eqref{cdl.eq.initial} does not get pinned,
i.e. does not converge to a nonnegative stationary solution if
$F$ is above a critical value $F_c$. To this end, we will show that nonnegative stationary solutions on bounded
intervals $[-N, N]$ with Dirichlet boundary conditions get large with high probability as  $N\to \infty.$

The main contribution of this paper is to show that a {\em finite} $F$ is sufficient to keep the graph moving, even if
it will have to pass through regions where $f(x,u,\omega)\ll -1,$ provided the probability of finding such a region is
small. As $f$ can become arbitrarily big, one cannot find a deterministic subsolution that keeps moving,
and instead probabilistic arguments are needed.

The interest in the model stems from
the theoretical analysis of the effective behavior on large scales of
models for interface evolution 
specified at a microscopic scale, which is at the heart of many
problems in physics and material science. Of particular interest is the influence of material heterogeneities, which are generally assumed to be random. Mathematically, this leads to studying the limit of evolution equations with rapidly varying random coefficients. In the case of dissipative equations,
on which we focus here, the randomness leads to new and interesting effects absent in the case of periodic coefficients, e.g. pinning and de-pinning for obstacles with a strength that cannot be bounded uniformly. If the strong obstacles
are sufficiently rare, than the interaction through the Laplacian helps the graph overcome them although the
total forcing $f(x,u)+F$ remains negative near the obstacle.

One example we have in mind as
motivation are driven elastic systems, for a review of the  research
in physics and its possible applications we refer to \cite{BN04}.
For a survey of front evolutions in random media, with evolution 
laws different from the ones considered here,  see e.g. the recent
monograph \cite{Xin}.

The model (\ref{cdl.eq.interface}) is obviously a  gradient flow
for a random energy. In fact, it approximates a more geometric interface
evolution law:

In fact, if the hypersurface $\Sigma$ is the boundary of the set 
$A_\Sigma$ then we can define for
any  bounded $D\subseteq \R^{2}$ the energy
$$
F(\Sigma| D):=H^{1}(\Sigma \cap D)+\int_{D\cap A_\Sigma} f(X,\omega)dX
$$where $X\in \R^{2}$  and $H^{1}$ denotes the 
$1$-dimensional Hausdorff measure.

Requiring that the first variation of 
that functional (with respect to inner variations, i.e. deforming the interface
with the flow of a smooth vector field) is proportional to the 
normal velocity of the interface leads
to forced mean curvature flow,
$$
V=\kappa+f(X),
$$where $\kappa$ denotes the mean curvature of the interface (trace
of the second fundamental form) and the scalar $V$ is the velocity of
the interface in the direction of the inner normal. This geometric evolution law leads to nonlinear degenerate parabolic
equations, hence questions concerning the large-scale behaviour of solutions are related to homogenising such
equations with periodic or random coefficients. 
This is an active field of research (see e.g. \cite{CSW}, \cite{LS}) but many difficult problems
remain open. Here we consider a modified evolution law:

If we suppose that the interface is a graph  is ``flat'' 
(no overhangs, small gradients)
then we can consider  a semi-linear equation as in
(\ref{cdl.eq.interface})
as heuristic approximation of the evolution by forced mean curvature flow.

This model, here called random obstacle model (ROM) because of the precise nature of the random nonlinearity 
$f(x,u,\omega)$ used in this paper, is a special case of a class of
equations sometimes called quenched  Edwards-Wilkinson model 
which,  for some choices of the random nonlinearity,  
is used in physics as a model for 
overdamped interface evolution in 
a random environment when
``overhangs'' can be neglected. 
For further comments on physical properties and justifications
of the model we refer to \cite{BN04}. In particular, one expects that solutions move with a deterministic effective (large-scale) velocity
for $F$ larger than a critical forcing $F_*.$ For $F$ slightly larger than $F_*,$ 
the relation between the effective velocity and $F-F_*$ is expected to be a power law.
(See also \cite{DY} for the periodic case.).

While there are important differences between the forced mean curvature flow and the semi-linear model (e.g. forced
mean curvature flow can "wrap around" strong obstacles),
we expect that the techniques we will develop 
when studying  (\ref{cdl.eq.interface}) will prove helpful
in investigating  more general models for interface evolution. This strategy was successful in the periodic case,
where first the semi-linear case was solved (\cite{DY}) and then the results could be extended to graphs
evolving by forced mean curvature flow (\cite{DKY}).

One more reason why such models are of mathematical interest is the relation with "singular" homogenization problems,
i.e. problems where the $\eps$-equation is of second order (possibly degenerate) and the homogenized equation
of first order.  Note that the effective
velocity $c(\eta)$ of an interface evolving with average slope 
$\eta$ can be found by considering
$$
\frac{\partial u}{\partial t}= u_{xx}(x,t) + f(x,\eta \cdot x+u(x,t)) +F, 
$$ i.e. this can be seen as  the ``cell problem'' for
$$
\frac{\partial v(y,\tau,\omega)}{\partial\tau}={ \eps} v_{yy}(y,\tau,\omega)+f({\eps^{-1}}y, 
{\eps^{-1} }v (y,\tau,\omega),\omega)+F
$$ with $\tau=\eps^{-1}t,\ y=\eps^{-1}x.$

The paper is organised as follows.
In Section 2 we define the random obstacle model precisely and state our main results.

In Section 3, we introduce an auxiliary model which is more suitable
for explicit estimates and whose solutions can be related  
to solutions of the original  equation \eqref{cdl.eq.interface}
by the comparison principle for parabolic equation. 
This auxiliary problem has the property that   any of its stationary solutions $u$ solve $u_{xx}=-F$ away from the obstacles and is a convex function  on the obstacles.
This fact allows us to define a discretization, using that
each solution is determined by its values when entering and leaving an obstacle.
This yields a discretised path $\bar v^\delta:\ \Z\to \delta\Z$ characterizing each stationary solution.

In section 4, we estimate the discrete Laplacian of $\bar v^\delta(i)$ against the obstacles that sit above and below
$i\in \Z$ and are approached by the path, i.e. $\Delta_d\bar v(i)+\bar F\le C\ell_{i,[\bar v^\delta(i)[}(\omega)$ where $\bar F$ is a constant which can be chosen arbitrarily large.
A technical problem is posed by the fact that the path may pass more 
than one  obstacle above the same integer.

In section 5 we estimate the probability of a discrete  path being "compatible" with the random environment.
This probability can be estimated against 
an auxiliary random measure on paths:
\begin{eqnarray*}
\P\left(\left\{
\omega:\ u(\omega)\  {\rm compatible \ with\  } \bar v^\delta (i)\right\}\right)
&\le& C^{2N}{\P}\left(\{ \Delta_d \bar v^\delta(i) \}_{i=-N+1}^{N-1}\right),
\\
 \P\left( \{\Delta_d \bar v^\delta(i) \}_{i=-N+1}^{N-1}\right)&:=&Z^{-1}
e^{-\lambda\sum_{i=-N+1}^{N-1} |\Delta_{d} \bar v^\delta (i)+\bar F |},
\end{eqnarray*}
where $Z$ is a normalization (corresponding to the partition function in statistical mechanics).

In section 6 we conclude that the probability of a nonnegative solution of the Dirichlet problem to cross
$KN-K|x|$ is ${\mathcal O}(e^{-CN}).$ 
The key observation is that for such a path $N^{-1}\sum_{i=-N+1}^{N-1}\left(\Delta_{d}\bar v^{\delta}(i)+\bar F\right)$ must be large,
which is very unlikely under the auxiliary (product) probability measure.

Finally, we show by invoking the comparison principle for semi-linear parabolic equations 
that these results for large $N$ imply non-existence  of global nonnegative 
stationary solutions. This  implies that for a solution $u$ 
of (\ref{cdl.eq.interface}), (\ref{cdl.eq.initial}) and 
all $x\in \R$ it holds that $\lim_{t\to\infty}u(t,x,\omega)=+\infty$ almost
surely in $\omega,$ i.e. the interface cannot be stopped by the obstacles.

{\bf Acknowledgements} The second named author would like to thank Enza Orlandi and Michael Scheutzow
for helpful discussions. 
The authors acknowledge gratefully the hospitality of the Max Planck Institute for Mathematics in the Sciences Leipzig.  

\section{Results and Definitions}~
\subsection{The random field $f$}
Here, the  field $f$ is negative on "obstacles" in $\R^2$ which are random in strength, but positioned 
on a lattice.  More precisely, we make the following assumption:
\begin{definition}[Obstacles]\label{Obstacles}\hfill
\begin{enumerate}
\item Let $\Z^*:=\Z+1/2.$ We assume that the obstacles lie on a lattice   $\L:=\Z\times\Z^*$ where for convenience $(b_{ij})_{_{i\in \Z,j\in\Z^*}}$  denotes the nodes of this lattice, i.e $b_{i,j}:=(i,j)$.
\item Let $\delta\ll1/2$ and define $Q_\delta(0,0):=[-\delta,\delta]^2,$ and   $Q_{\delta}(i,j):=Q_\delta(0,0)+b_{i,j}.$
Then the obstacles, i.e. regions where  $f<0$ is possible, are given by the $Q_\delta(i,j),$ see also Figure 1
\end{enumerate}
\end{definition}

\begin{figure}
\label{fig1}
\begin{center}
\input{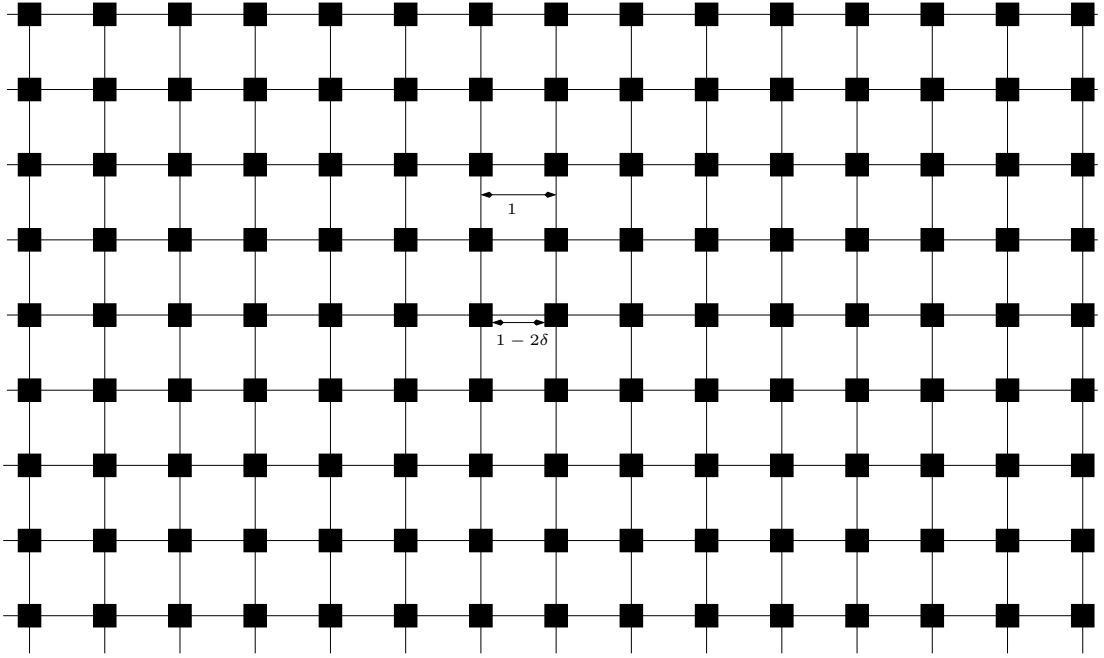}
\caption{The obstacles}
\end{center}
\end{figure}
In order to obtain existence and regularity of the solutions, the nonlinearity $f(x,y)$ should be sufficiently
regular, hence in order to define $f$ we have to smooth out the obstacles.
\begin{definition}[Random field]
Let $\phi \in C^{\infty}_c$ be a nonnegative function 
such that its  support is contained in  cube $Q_\delta(0,0)$.

Let ($l(i,j)(\o))_{(i,j)\in \Z\times\Z^*}$ be a family of  independent identically distributed exponential 
random variables, i.e.  there exists $\lambda_0>0$ such that for $r\ge 0$ 
$$ \P\{l(i,j)(\o)>r\}=e^{-\lambda_0 r}.$$

Let $\Sigma$ be the set of the obstacles, i.e. 
$\Sigma := \bigcup_{(i,j) \in \Z\times \Z^*} \Big(Q_\delta(b_{i,j})\Big),$ 
then the field $f$ is defined the following way:
$$
f(x,s)=g(x,s)-\sum_{(i,j)\in \Z\times\Z^*} l(i,j)\phi((x,s)-b_{i,j})  
$$
where $g$ is a non-negative function chosen so that the field has mean zero in a suitable sense:
\begin{align*}
&g\ge 0 \quad \text{in}\quad \R^{2}\\
&\lim_{L\to\infty} (2L)^2\int_{[-L,L]^2}f(x,s)\,dxds =0
 \end{align*}
\end{definition}
\begin{remark}
\begin{enumerate}
\item
As $\E (l(i,j))=\frac{1}{\lambda},$ the law of large numbers implies that a possible choice of 
$g$ is $$g(x,s)=\sum_{(i,j)\in \Z\times\Z^*} \frac{1}{\lambda}\phi((x,s)-b_{i,j}) .$$
\item The results on non-existence of nonnegative stationary solutions hold for any
i.i.d. random variables $l(i,j)$ such that there exists $\lambda_0>0$ with
$$
 \P\{l(i,j)(\o)>r\}\le e^{-\lambda_0 r}.$$
 \item As we are only interested in the combined effect of  $f(x,s)$ and the constant forcing $F,$
 the mean zero property of the random nonlinearity is just a normalisation.
\item  In our analysis, the 
shape of the obstacles  $(supp (\phi))$ 
plays no role  and the results will stand 
as well if we consider a random field like e.g. 
$$f=g(x,s)-\sum_{(i,j)\in \Z\times\Z^*} l(i,j)\phi_{i,j}((x,s))  $$ 
where $\phi_{i,j}$ are smooth functions uniformly bounded  and 
such that    $supp(\phi_{i,j})\subset Q_{\delta}(b_{i,j})$ .  
\end{enumerate}
\end{remark}

\subsection{Results:}~
 We consider the stationary version of \eqref{cdl.eq.interface} with Dirichlet   boundary conditions:
 \begin{align}
& u_{xx} + f(x,u,\o) +F=0 \quad \text{ in } \quad [-N+\delta ,N-\delta] \label{cdl.eq.stat}\\
&u(-N+\delta)=u(N-\delta)=0 \label{cdl.eq.dirichlet}
\end{align} 
  
\begin{theo}\label{mainthm}
Let  $u(\o)$ solve (\ref{cdl.eq.stat}, \ref{cdl.eq.dirichlet}). Then there exist $F_0>0,$ $C$ and $K$ such that 
for $F>F_0$ and $N$ sufficiently large
$$
\P\left( \{\o|\, u(x,\o)\ge (K(N-1)-K|x|)_+\ {\rm on\ }[-N+\delta ,N-\delta] \}\right)\ge 1- C e^{-\frac{N}{C}}, 
$$where  $a_+$ denotes the positive part of a real number $a.$
\end{theo}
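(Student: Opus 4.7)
The plan is to follow the program laid out in the introduction: reduce to an auxiliary stationary problem whose solutions have a rigid piecewise structure, encode each such solution by a discrete path, and then show by a large-deviation argument on the obstacle strengths that ``low'' paths are exponentially improbable.

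First, I would replace $f$ by an auxiliary nonlinearity $\tilde f$ which vanishes off obstacles and is a controlled perturbation on each $Q_\delta(i,j)$. By the comparison principle for elliptic equations, nonnegative stationary solutions of the original problem are bounded below by stationary solutions of the auxiliary problem, so it suffices to prove the lower bound for the latter. Since $\tilde f$ vanishes between obstacles, any stationary $\tilde u$ satisfies $\tilde u_{xx}=-F$ on each open interval $(i+\delta,i+1-\delta)$, hence is piecewise parabolic, while on each obstacle it is convex. Consequently $\tilde u$ is completely determined by its values when entering and leaving obstacles, and can be encoded by a discrete path $\bar v^\delta(i)\in\delta\Z$ that records the slot through which the curve passes over the column $x=i$.

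Second, I would establish an obstacle-by-obstacle estimate linking the discrete Laplacian of the path to the strengths $l(i,j)$ of the obstacles it traverses. Integrating the equation vertically across each column yields an inequality of the form
$$
\Delta_d \bar v^\delta(i)+\bar F \le C\sum_{j\in I_i(\bar v^\delta)} l(i,j),
$$
where $I_i(\bar v^\delta)$ enumerates the obstacles swept by the path between columns $i-1$ and $i+1$, and where $\bar F$ can be taken arbitrarily large at the cost of a constant in $C$. Independence of the $l(i,j)$ together with their exponential tails then gives, for a fixed path $\bar v^\delta$,
$$
\P\bigl(u(\omega)\text{ compatible with }\bar v^\delta\bigr)\le C^{2N}\exp\Bigl(-\lambda\sum_{i=-N+1}^{N-1}|\Delta_d \bar v^\delta(i)+\bar F|\Bigr),
$$
where the factor $C^{2N}$ absorbs the combinatorial choices of which obstacles are touched in each column.

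Third, a union bound over discrete paths yields the theorem. If $u\ge 0$ is a stationary solution whose graph stays below the triangular profile $(K(N-1)-K|x|)_+$ at some point, then because $u(\pm(N-\delta))=0$ and because $u_{xx}=-F$ off obstacles, the net ``curvature'' $\sum_i (\Delta_d\bar v^\delta(i)+\bar F)$ must exceed a quantity of order $N\bar F$. Paths with this property are penalized by a factor $\exp(-cN\bar F)$ under the auxiliary measure, whereas the number of admissible discrete paths in a box of width $2N$ and height $\mathcal{O}(N)$ grows at most as $e^{C'N}$. Choosing $\bar F$ large enough that $\lambda\bar F>C'+\log C$ closes the competition between entropy and energy and produces the announced probability $1-Ce^{-N/C}$.

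The hard part is the second step: the discrete Laplacian bound must accommodate paths that cross several obstacles between adjacent integer columns, and the resulting multi-obstacle estimate has to remain sharp enough that the exponential tails beat the combinatorial entropy. Ensuring that $\bar F$ can be chosen arbitrarily large, independently of the geometric constants produced by the obstacle shape $\phi$, is precisely what makes the final large-deviation balance feasible.
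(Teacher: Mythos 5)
Your first two steps track the paper's strategy (auxiliary field, piecewise-quadratic blocked paths, discrete Laplacian estimate), but your third step — the union bound over discrete paths — contains the essential gap, and it is precisely the point the paper goes out of its way to avoid.

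You claim that the number of admissible discrete paths on $[-N,N]$ with height $\mathcal{O}(N)$ is at most $e^{C'N}$, and that choosing $\bar F$ large enough closes the ``entropy vs.\ energy'' competition. This count is wrong: the discrete path $\bar v^\delta$ takes values in $\delta\Z$, so over $2N$ columns each of height $\mathcal{O}(N)$ there are on the order of $(N/\delta)^{2N}=e^{2N\log(N/\delta)}$ paths, which grows \emph{faster} than any exponential in $N$. The paper says this explicitly (``the number of possible discrete paths grows faster than exponentially in $N$'') and for exactly this reason does not attempt a direct union bound. Instead it introduces a genuine normalized probability measure $\widetilde\P$ on discrete Laplacians (Definition~\ref{Ptilde}) under which $(\Delta_d\bar v^\delta(i)+\bar F)_\pm$ are i.i.d.\ exponential, proves $\P_b[\bar v^\delta\text{ compatible}]\le e^{\tilde C N}\,\widetilde\P[\Delta_d\bar v^\delta]$ (Corollary~\ref{cdl.cor.auxmeas}), and then estimates $\widetilde\P[\text{crossing event}]$ by Cram\'er's large deviation bound. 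The sum over all paths is then automatically $\le e^{\tilde CN}\widetilde\P[\text{crossing}]$ because $\widetilde\P$ has total mass one — the super-exponential combinatorics is absorbed into the normalization $Z^{2N-1}$. You do write ``under the auxiliary measure'' at one point, but your proposal never defines such a measure, and as written your estimate is an unnormalized penalty factor combined with an exponential path count, which does not give the result.

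A secondary, smaller gap: your one-line obstacle-by-obstacle bound $\Delta_d\bar v^\delta(i)+\bar F\le C\sum_{j\in I_i}l(i,j)$ cannot directly yield $\P(\text{compatible})\le C^{2N}\exp(-\lambda\sum_i|\Delta_d\bar v^\delta(i)+\bar F|)$ with a constant $C^{2N}$ independent of the path. The number of obstacles $|I_i|$ crossed in column $i$ scales with the gradient $M(i)$, and an \emph{unnormalized} sum of $m$ exponentials has Laplace transform $(\lambda_0/(\lambda_0-\lambda))^m$ growing with $m$, ruining the uniformity. The paper circumvents this by deriving the normalized bound $k(i)\le \tfrac{18\delta}{M(i)}\sum_j l(i,j)$ (Lemma~\ref{cdl.lem.esti3}), so that the relevant quantity is an \emph{average} of exponentials, whose Laplace transform is bounded independently of the number of summands (Lemma~\ref{cdl.lem.prob1}(3), Corollary~\ref{corprob}). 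You would need this normalization to make the probabilistic estimate in your second step legitimate.
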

\begin{cor}Let $F>F_0,$ with $F_0$ as in Theorem \ref{mainthm}.
{\hfill }
\begin{enumerate}
\item 
There is almost surely no global nonnegative stationary solution of
(\ref{cdl.eq.interface}).
\item Let $u$ solve (\ref{cdl.eq.interface}),
(\ref{cdl.eq.initial}). Then
$$\lim_{t\to\infty}u(t,x,\omega)=+\infty\quad  {\rm for\ all\ }x\in \R$$
holds with probability one.
\end{enumerate}
\end{cor}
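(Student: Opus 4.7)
The plan is to deduce both assertions from Theorem \ref{mainthm} via the parabolic comparison principle combined with a Borel--Cantelli argument. The key structural observation is that the obstacles are centred at lattice heights $j\in\Z^{*}$ and have vertical half-width $\delta\ll 1/2$, so the line $\{y=0\}$ is obstacle-free; hence $f(x,0,\omega)=g(x,0)\ge 0$, and the constant function $w\equiv 0$ is a strict sub-solution of both (\ref{cdl.eq.interface}) and (\ref{cdl.eq.stat}). Moreover, since $l(i,j)\ge 0$ and $\phi\ge 0$, one has $f\le g\le \|\phi\|_{\infty}/\lambda$, so for $\bar A$ depending only on $F$ and this upper bound the quadratic $v_{N}(x):=\bar A\bigl((N-\delta)^{2}-x^{2}\bigr)$ is a stationary super-solution of (\ref{cdl.eq.stat}) on $[-N+\delta,N-\delta]$ that vanishes on the boundary.

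I would first prove (2) and then deduce (1). Let $u(t,x,\omega)$ solve (\ref{cdl.eq.interface})--(\ref{cdl.eq.initial}). Comparing $u$ with the sub-solution $w\equiv 0$ gives $u\ge 0$ for all $t\ge 0$; moreover, $u_{t}(0,x)=f(x,0)+F\ge F>0$, so $u(\epsilon,\cdot)\ge u(0,\cdot)$ for small $\epsilon>0$, and comparing the time-shift $u(\cdot+\epsilon,x)$ with $u(\cdot,x)$ yields that $t\mapsto u(t,x,\omega)$ is nondecreasing. Let now $\bar u_{N}(t,x,\omega)$ denote the solution of the parabolic Dirichlet problem on $[-N+\delta,N-\delta]$ with zero initial datum and zero boundary values. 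The same arguments give $0\le\bar u_{N}(t,\cdot)\le v_{N}(\cdot)$ and monotonicity in $t$, so $\bar u_{N}(t,x,\omega)$ increases pointwise towards some limit $\bar u_{N}^{*}(x,\omega)$; since on the bounded slab $[-N,N]\times[0,\bar A N^{2}]$ only finitely many obstacles are active, $f(\cdot,u)$ is uniformly bounded there, and standard $C^{2,\alpha}$ parabolic regularity (up to the boundary) shows that $\bar u_{N}^{*}$ is a nonnegative stationary solution of (\ref{cdl.eq.stat})--(\ref{cdl.eq.dirichlet}).

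Theorem \ref{mainthm} applied to $\bar u_{N}^{*}$ then yields
\[
\P\bigl(\{\omega:\bar u_{N}^{*}(0,\omega)\ge K(N-1)\}\bigr)\ \ge\ 1-Ce^{-N/C}.
\]
By parabolic comparison of $u$ and $\bar u_{N}$ on $[-N+\delta,N-\delta]$ (legitimate since $u\ge 0=\bar u_{N}$ on the lateral boundary and $u(0,\cdot)=\bar u_{N}(0,\cdot)=0$), one has $u(t,0,\omega)\ge \bar u_{N}(t,0,\omega)\uparrow \bar u_{N}^{*}(0,\omega)$, so $\liminf_{t\to\infty}u(t,0,\omega)\ge K(N-1)$ on the same event. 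Summability of $Ce^{-N/C}$ and Borel--Cantelli give $\lim_{t\to\infty}u(t,0,\omega)=+\infty$ almost surely, and the identical argument applied on intervals centred at any $x_{0}\in\R$ extends this pointwise to every $x\in\R$, establishing (2). Item (1) is then immediate: if on an event of positive probability there existed a global nonnegative stationary solution $v$ of (\ref{cdl.eq.interface}), then, since $v$ is time-independent and $v(\cdot,\omega)\ge 0=u(0,\cdot,\omega)$, parabolic comparison would give $u(t,x,\omega)\le v(x,\omega)<\infty$ for every $t,x$, contradicting (2).

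The principal technical obstacle is that $f$ is unbounded from below, so the parabolic comparison principle cannot be invoked off-the-shelf with a uniform Lipschitz constant for the nonlinearity. This is resolved by working on compact regions in $(x,u)$, where only finitely many obstacles are active and $f,\,f_{u}$ are therefore random but finite; on such regions the classical maximum principle for semi-linear parabolic equations with locally bounded right-hand side applies and propagates the ordering between sub- and super-solutions used throughout the argument.
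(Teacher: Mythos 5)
Your proof is correct and follows essentially the same route as the paper: set up the parabolic Dirichlet problem on $[-N+\delta,N-\delta]$ with zero data, show the solution increases monotonically in time to a nonnegative stationary solution of (\ref{cdl.eq.stat})--(\ref{cdl.eq.dirichlet}), apply Theorem \ref{mainthm} together with Borel--Cantelli to force $\liminf_{t\to\infty}u(t,x,\omega)=+\infty$, and then deduce (1) by parabolic comparison. The remaining differences are cosmetic --- you prove monotonicity via a time-shift comparison rather than by differentiating the equation in $t$ and applying the maximum principle to $\partial_t v^N$, you work directly with $f$ rather than the auxiliary $\tilde f$ (which in fact matches the statement of Theorem \ref{mainthm} more closely), and your closing remark on handling the merely locally bounded nonlinearity by restricting to compact regions in $(x,u)$ mirrors the paper's own treatment.
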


\section{Blocked path and auxiliary problem}
In this section we define a auxiliary problem that we will constantly use along this paper.
We will denote by  $\chi_B$ the characteristic function of the set $B.$
\begin{definition}[Auxiliary field]\label{auxfield}
Let
\begin{align*}
&A:= \R^2\setminus \{\bigcup_{i\in \Z}(i-\delta,i+\delta)\times\R \}\\
&A_\eps:= \R^2\setminus \{\bigcup_{i\in \Z}(i-\delta-\eps,i+\delta+\eps)\times\R \}
\end{align*}

and define
$$\tilde f(x,s):=-\sum_{(i,j)\in \Z^*\times\Z^*} l(i,j)\phi((x,s)-b_{i,j}).$$
\end{definition}
Let us now consider the following auxiliary problem 
\begin{align}
 &\frac{\partial v}{\partial t}\,=\,v_{xx} + \tilde f(x,v(t,x)) +F\chi^\eps_A(x) \label{cdl.eq.approx}\\
 &v(0,x) \,=\,0,
\end{align}
where $\chi_A^{\eps}$ is a smooth function such that $ \chi_{A_\eps}\le\chi_A^{\eps}\le \chi_A$. $\eps$ is a small parameter which will be fix later on.

To visualize the new random field defined by $\tilde g= \tilde f+F\chi^\eps_A(x)$ see figure 2.  
Note that it is differentiable in $x$ and $s.$
\begin{figure}
\begin{center}
\input{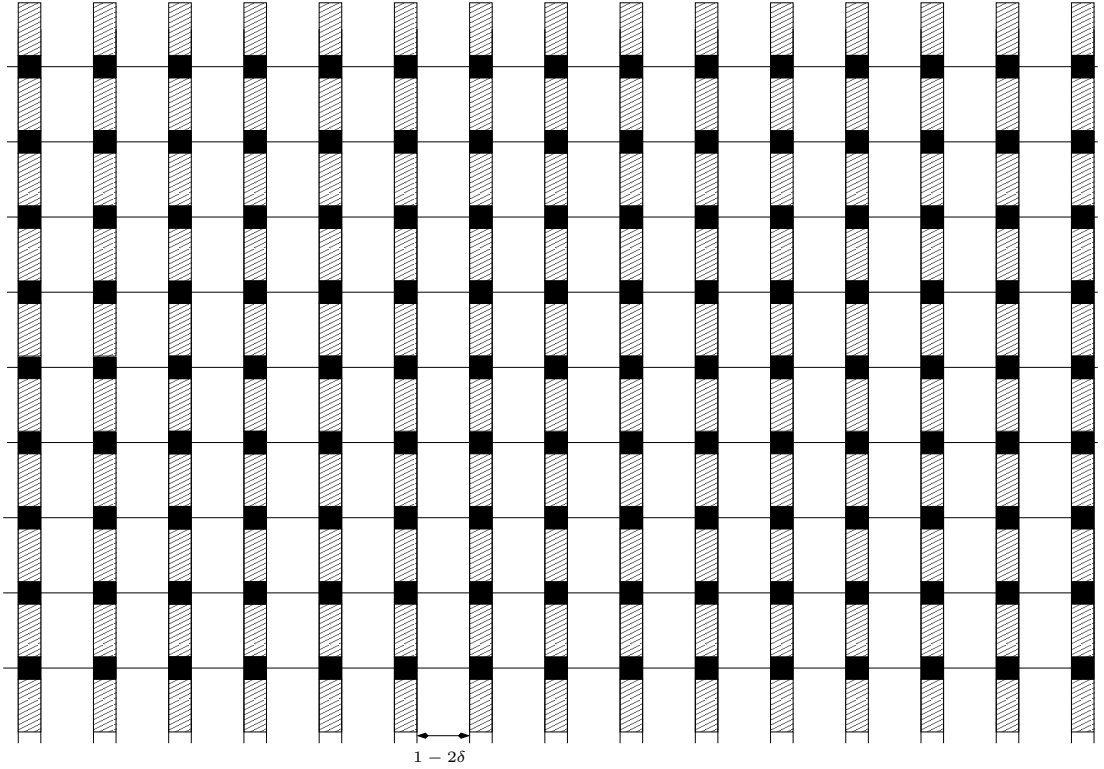}
\caption{Mapping of the obstacles for the auxiliary problem}
\end{center}
\end{figure}

Observe that, as the obstacles are negative,  $\tilde f+F\chi^\eps_A  \le f+F.$ Therefore the comparison principle
for the parabolic equation (see section \ref{existence}) implies that solutions of the auxiliary problem
remain below solutions of the original problem. Hence existence of a nonnegative stationary solution for
the original problem implies existence of one for the auxiliary problem. By contraposition, nonexistence
for the auxiliary problem implies nonexistence for the original problem.

Stationary sub/supersolutions can be constructed as piecewise quadratic functions.
For any $F$ we can construct the graph of such a solution 
(also called "paths" to emphasize the analogy with a stochastic process).
\begin{definition}[blocked path] \label{blocked} A
 graph $(x,v(x))$ is called blocked path if and only if
$v \in  C^1_{loc}(\R),$ and 
\begin{align}
&v_{xx}=-F\chi_A^\eps(x) \qquad \text{ in } \qquad (i+\delta ,i+1-\delta),\label{cdl.eq.outside.strip}\\ 
&v_{xx}= \sum_{j\in \Z^*} l(i,j)(\o)\phi_{i,j}(x,v(x))  \qquad \text{ in } \qquad (i-\delta , i+\delta)\label{cdl.eq.inside.strip}. 
\end{align}
where $\phi_{i,j}(x,s):=\phi((x,s)-b_{i,j})$.
\end{definition}
Observe that the path for $x\in (i+\delta ,i+1-\delta)$ is uniquely determined by the  boundary values 
 $v(i+\delta)$ and $v(i+1-\delta),$ because it solves a {\em linear} elliptic equation there.
But note that, for a given realisation of the random field, there may be more
than one blocked path, as equations like $u_{xx}=f(x,u)$ do not have unique solutions without further conditions on
the nonlinearity.
\begin{remark}
>From Definition \ref{blocked}, we see that $v$ is a convex function in $(i-\delta , i+\delta)$ and hence we have
$$v(i+\delta)\ge v(i-\delta)+2\delta v'(i-\delta) $$ 
\end{remark}

Let us now define some discrete quantities that we will use throughout the paper.

\begin{definition}\label{hat_v}
Let $\hat v(i)$ and $\bar v^{\delta}[i]$ be defined as follows:
 $$\hat v(i):=v(i-\delta)+2\delta v_{x}(i-\delta),$$ 
$$\bar v^{\delta}[i]:=\delta \left[\delta^{-1}\hat v(i)-\frac{1}{2} \right]=\inf\{j\in \delta\Z\, |\,j\ge \hat v(i)-\frac{\delta}{2} \}
 \in\delta\Z.$$
\end{definition}

We will need the following Lemma.

\begin{lem}\label{comparisonlemma}
Let $v$ be as in Definition \ref{blocked} and  
$\hat v,\ \bar v^\delta$ be  in Definition \ref{hat_v}. Denote by $\bar w^\delta$ the
piecewise linear interpolation of $\bar  v^\delta,$ and by $w$ the piecewise linear interpolation
of $\hat v.$ Then
$v+\delta/2\ge \bar w^\delta,$ and $v\ge w.$
 \end{lem}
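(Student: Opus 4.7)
The proof rests on the convexity/concavity structure of the blocked path: by~\eqref{cdl.eq.inside.strip}, $v_{xx}\ge 0$ on each obstacle strip $(i-\delta,i+\delta)$, so $v$ is convex there, and by~\eqref{cdl.eq.outside.strip}, $v_{xx}\le 0$ on each non-obstacle segment $(i+\delta,(i+1)-\delta)$, so $v$ is concave there. The whole argument would be driven by these two facts together with the elementary bound
$$v(i+\delta)\ge v(i-\delta)+2\delta v_x(i-\delta)=\hat v(i),$$
obtained by placing the tangent line to $v$ at $i-\delta$ below the convex function $v$ on $[i-\delta,i+\delta]$.

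To prove $v\ge w$, I would compare piecewise. At each node of the PL interpolation, $v\ge \hat v$ follows immediately from the tangent estimate above. On each non-obstacle interval, $v$ is concave and therefore lies above the chord through its boundary values, and this chord in turn lies above the corresponding linear segment of $w$, because the endpoint values of $v$ exceed the values prescribed by $\hat v$. On each obstacle strip, the tangent line to $v$ at the left endpoint is itself a linear lower bound for $v$ that agrees with the PL prescription of $w$ at the right end of the strip. Gluing these pieces at the transition points $i\pm\delta$ then yields $v\ge w$ pointwise on $\R$.

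The first inequality $v+\delta/2\ge \bar w^\delta$ follows from the second together with the rounding estimate
$$\bar v^\delta[i]=\inf\{j\in\delta\Z\,:\,j\ge \hat v(i)-\tfrac{\delta}{2}\}\le \hat v(i)+\tfrac{\delta}{2},$$
which after piecewise linear interpolation gives $\bar w^\delta\le w+\delta/2$, and hence $v+\delta/2\ge w+\delta/2\ge \bar w^\delta$.

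The main obstacle is the gluing step in the second paragraph: while each individual chord-or-tangent comparison is elementary, one must verify that the affine function $w$ does not slip above $v$ across the transitions at $i\pm\delta$ where the sign of $v_{xx}$ changes. It is here that the $C^1$ regularity of $v$, which matches the slopes at the strip boundaries, is used to ensure that the tangent-line bound on the obstacle strip and the chord bound on the adjacent non-obstacle segment are consistent.
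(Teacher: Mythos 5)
Your proposal takes a genuinely different route from the paper (direct chord/tangent comparison rather than an auxiliary barrier plus the maximum principle), and you correctly identify the main ingredients: convexity of $v$ on $[i-\delta,i+\delta]$, concavity on $[i-1+\delta,i-\delta]$, and the tangent estimate $\hat v(i)\le v(i+\delta)$. But the ``gluing'' step that you flag at the end is a genuine gap, and the closing remark about $C^1$ regularity does not close it.

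Here is the issue. The linear pieces of $w$ live on $I_i:=[i-1+\delta,i+\delta]$, joining $(i-1+\delta,\hat v(i-1))$ to $(i+\delta,\hat v(i))$. Each such piece straddles a non-obstacle segment $[i-1+\delta,i-\delta]$ \emph{and} an obstacle strip $[i-\delta,i+\delta]$, so the transition point $x=i-\delta$ is interior to a linear piece of $w$ and is \emph{not} a node of the interpolation. When you compare the chord of $v$ on the non-obstacle segment with $w$, you control the two at the left endpoint ($v(i-1+\delta)\ge\hat v(i-1)=w(i-1+\delta)$) but not at the right endpoint $i-\delta$; likewise on the obstacle strip you control the tangent against $w$ only at $i+\delta$. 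Your sentence ``the endpoint values of $v$ exceed the values prescribed by $\hat v$'' does not apply at $i-\delta$, since $\hat v$ prescribes nothing there. So both of your sub-comparisons silently assume $v(i-\delta)\ge w(i-\delta)$, which is precisely the content that needs proof; matching slopes ($C^1$) alone does not give it.

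The missing inequality does hold, but it requires a computation: with $w(i-\delta)=2\delta\,\hat v(i-1)+(1-2\delta)\,\hat v(i)$, the bounds $\hat v(i-1)\le v(i-1+\delta)$ and $\hat v(i)=v(i-\delta)+2\delta v_x(i-\delta)$ reduce the claim $v(i-\delta)\ge w(i-\delta)$ to
$$
v(i-\delta)-v(i-1+\delta)-(1-2\delta)\,v_x(i-\delta)\ \ge\ 0,
$$
which follows because $v$ is concave on $[i-1+\delta,i-\delta]$ and so lies below its tangent at $i-\delta$. Once you have this, your two affine-versus-affine comparisons on the sub-intervals of $I_i$ do close up, and the piecewise argument yields $v\ge w$. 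Your reduction of $v+\delta/2\ge\bar w^\delta$ to $v\ge w$ via $\bar v^\delta[i]\le\hat v(i)+\delta/2$ is correct as stated.

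For contrast, the paper sidesteps the pointwise check at $i-\delta$ by introducing on $I_i$ an auxiliary function $\hat w$ solving $\hat w_{xx}=-F1_{[i-1+\delta,i-\delta]}$ with boundary data $v(i-1+\delta)$ and $\hat v(i)$; a shooting argument shows $\hat w=v$ on the non-obstacle part, convexity gives $\hat w\le v$ on the strip, and then $\hat w\ge w$ follows from the elliptic comparison principle since $\hat w_{xx}\le0=w_{xx}$ on $I_i$ with $\hat w\ge w$ on $\partial I_i$. That route replaces your transition-point verification by a single maximum-principle application.
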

\begin{proof}
First, note that convexity of $v$ in $[i-\delta,i+\delta]$ implies
that $\hat v(i)\le v(i+\delta).$ 

Let $$I_i:=(i-1+\delta,i+\delta)$$ and let the auxiliary function 
$\hat w$ be the solution of
\begin{eqnarray*}
&&\Delta \hat w=-F1_{[i-1+\delta,i-\delta]}\quad {\rm on\ }I_i\\
&&\hat w(i-1+\delta)=v(i-1+\delta),\quad \hat w(i+\delta)=\hat v(i).
\end{eqnarray*}
This function is $C^{1}$ on its domain and solves the ODE
$\hat w_{xx}=-F$ on $(i-1+\delta, i-\delta).$ (Here $x$ is considered as "time"). Suppose 
$\hat w(i-\delta)>v(i-\delta).$ Then $\hat w_{x}(i-\delta)<
v_{x}(i-\delta),$ and integrating the ODE backwards in $x$ we obtain
$\hat w(i-1+\delta)>v(i-1+\delta),$ a contradiction. Assuming  
$\hat w(i-\delta)<v(i-\delta)$ we obtain a contradiction in a similar
way,
and we conclude $\hat w(i-\delta)=v(i-\delta).$ This implies 
  $\hat w=v$ on $[i-1+\delta,i-\delta]$ and (by convexity
of $v$ on $[i-\delta,i+\delta]$) $\hat w\le v$ on
$[i-1+\delta,i-\delta].$

Now consider
\begin{eqnarray*}
&&\Delta w=0\quad {\rm on\ }I_i\\
&&w(i-1+\delta)=\hat v(i-1),\quad  w(i+\delta)=\hat v(i)
\end{eqnarray*}
Clearly $w$ is the piecewise linear interpolation of 
$\hat v.$

As $\Delta \hat w \le \Delta w$ and $w\le \hat w$ on $\partial I_i,$ 
the comparison principle for the Laplace 
operator
implies $\hat w\ge w,$ so
$v\ge \hat w\ge w.$ The conclusion  for $\bar w^\delta$ follows immediately.

\end{proof}

\subsection{Existence and uniqueness for parabolic equations}\label{existence}
\begin{lem}
There exists a global classical solution of the parabolic Cauchy problems (\ref{cdl.eq.interface}),
and   (\ref{cdl.eq.approx}) with initial conditions which are uniformly bounded and locally
$C^2.$
The solutions are unique. If $0\le v_0\le u_0,$ $v$ solves  (\ref{cdl.eq.approx}) with initial condition
$v_0,$ $u$ solves (\ref{cdl.eq.interface}) with initial condition $u_0,$ then  $v \le u.$
\end{lem}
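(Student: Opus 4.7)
The plan is to proceed along standard lines for semilinear parabolic equations, with one twist: the random coefficient $f(\cdot,\cdot,\omega)$ is neither uniformly bounded nor uniformly Lipschitz in $x$, since the obstacle strengths $l(i,j)(\omega)$ are unbounded over $(i,j)$. I would therefore fix $\omega$ and exploit two structural features of $f$: it is $C^1$ in both variables (hence locally Lipschitz), and it satisfies the uniform upper bound $f \le g \le \|g\|_\infty$ because the obstacle contribution is non-positive. In addition, $\phi$ has compact support in the $s$-variable, so for any fixed $x$ only finitely many obstacles contribute to $f(x,s)$ for $s$ in any bounded interval.

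For local existence I would first solve the equation on truncated spatial domains $[-R,R]$ with, say, homogeneous Dirichlet boundary data, where only $O(R)$ obstacles enter and $f$ is bounded and Lipschitz on any bounded $s$-range; standard parabolic Schauder theory (or a Picard iteration in $C^{2+\alpha,1+\alpha/2}$) produces a classical solution $u^R$. To push $R\to\infty$ I would establish the uniform upper bound $u^R(t,x) \le \sup u_0 + (\|g\|_\infty + F)t$ by comparison with the spatially constant supersolution $\bar u(t) := \sup u_0 + (\|g\|_\infty + F)t$ (a direct check: $\bar u_t - \bar u_{xx} - f(x,\bar u) - F = \|g\|_\infty - f(x,\bar u) \ge 0$). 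Combined with interior parabolic $C^{2+\alpha}$ estimates and local lower bounds coming from the finiteness of obstacle strengths on any bounded window, this gives $C^{2+\alpha}_{\mathrm{loc}}$ compactness; a diagonal subsequence of $\{u^R\}$ then converges to a classical global solution on $\R\times\R^+$. Uniqueness is the standard maximum-principle argument: writing $w := u^1 - u^2$ and linearising $f(x,u^1) - f(x,u^2) = c(t,x)\,w$ via the mean value theorem yields a linear parabolic equation with locally bounded coefficients and vanishing initial data, forcing $w\equiv 0$.

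The comparison statement $v \le u$ then follows from the same maximum-principle idea. The pointwise estimates $\tilde f \le f$ (since $f - \tilde f = g \ge 0$) and $\chi_A^\eps \le 1$ give, for $w := v - u$ and $c(t,x) := \int_0^1 \partial_s \tilde f(x,u+\theta w)\,d\theta$,
\[
w_t - w_{xx} - c(t,x)\,w \;=\; \bigl[\tilde f(x,u) - f(x,u)\bigr] + F\bigl[\chi_A^\eps(x) - 1\bigr] \;=\; -g(x,u) - F\bigl(1 - \chi_A^\eps(x)\bigr) \;\le\; 0,
\]
with $w(0,\cdot) = v_0 - u_0 \le 0$. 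Both $v$ and $u$ satisfy the same uniform upper bound $\bar u$, and $\tilde f$ is locally Lipschitz in $s$, so $c$ is bounded on the relevant region; the standard maximum principle for bounded classical solutions on $\R$ (equivalently, applying comparison on $[0,T]\times[-R,R]$ and letting $R\to\infty$) yields $v\le u$.

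The main technical obstacle is the absence of any deterministic lower bound on $f$, which prevents a direct Picard iteration in a uniform $C_b$ space over all of $\R$. The bounded-domain approximation together with the single-sided supersolution circumvents this: on each $[-R,R]$ the problem is genuinely Lipschitz, and the $R\to\infty$ limit is controlled purely by local parabolic regularity and the $x$-uniform upper bound, while the lack of a lower bound on $f$ never prevents well-posedness because $\phi$ has compact support in $s$ (no blow-up as $|u|\to\infty$).
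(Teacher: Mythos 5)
Your approach is genuinely different from the paper's. Where you truncate the \emph{spatial domain} to $[-R,R]$ and run a compactness--diagonal argument as $R\to\infty$, the paper truncates the \emph{obstacle strengths}: it replaces $l(i,j)$ by $l^M(i,j):=M\wedge l(i,j)$, making $f^M$ and $\tilde f^M$ globally bounded and globally Lipschitz in $s$. A Banach fixed-point iteration in $L^\infty$ then gives local, hence global, solutions $u^M$ on all of $\R$ in one stroke; since $f^M$ decreases pointwise in $M$, the comparison principle makes $u^M$ monotone in $M$, so the limit exists pointwise with no subsequence extraction, and local parabolic regularity (valid because for fixed $\omega$ the obstacles are bounded on any compact region) upgrades the limit to a classical solution. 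The paper's route removes, already at the truncated level, precisely the two obstructions you identify --- unboundedness of $f$ and lack of a uniform Lipschitz constant --- simultaneously and in a way compatible with a monotone passage to the limit, which is structurally simpler than a diagonal argument.

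Two points in your version need shoring up. For the diagonal extraction you need $L^\infty$ bounds on $u^R$ over compact parabolic cylinders that are \emph{uniform in $R$}. Your supersolution $\bar u(t)$ gives the uniform upper bound, but the lower bound is the delicate one: the ``bounded window'' on which you invoke finiteness of the obstacle strengths is a window in $(x,s)$, and you do not know a priori that $u^R$ stays in a bounded $s$-range on $[-L,L]\times[0,T]$ uniformly in $R$. This is fixable (for fixed $\omega$ the supremum of $|f|$ over $[-L,L]\times[-S,S]$ grows only logarithmically in $S$, which rules out finite-time blow-down), but it is not automatic and should be argued. Second, in the comparison step you assert that $c(t,x)=\int_0^1\partial_s\tilde f(x,u+\theta w)\,d\theta$ is ``bounded on the relevant region''; in fact $\sup_i l(i,j)(\omega)=\infty$ almost surely, so $c$ is unbounded in $x$ even on the strip $\{0\le s\le\bar u(T)\}$. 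Your fallback --- comparison on $[-R,R]\times[0,T]$ followed by $R\to\infty$ --- is the right move, but it again requires control of boundary contributions as $R\to\infty$, e.g.\ via a Phragm\'en--Lindel\"of-type argument exploiting the boundedness of $w$ and the slow growth of $c$.
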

Proof: For $M\in \N, $  replace $l(i,j)(\omega)$ by $l^M(i,j):=M\wedge l(i,j)$, where $\wedge$ denotes the operation $ a\wedge b:=\inf\{a,b\} $.The corresponding fields
$f^M,\ \tilde f^M$ are uniformly bounded and uniformly Lipschitz in $s.$ Therefore we can apply the Banach
fixed point theorem in $L^\infty$ in order to obtain a local in time solution, which,
by local parabolic regularity, is classical. It can be extended as the nonlinearity is uniformly
bounded. Hence a global solution $u^M(x,t)$ exists. Note that by the comparison principle $u^{M}$ is a positive monotonic non-increasing function of $M$ i.e. $u^M>u^N>0$ for $N>M$ , so $u(x,t):=\lim_{M\to\infty}u^M(x,t)$ exists.
Applying regularity locally, (where the obstacles are bounded) we obtain that the limit is a classical solution.
\section{\textit{a priori} estimates on $\hat v(i)$ and $\bar v^{\delta}[i]$}
In this section, we establish some a-priori estimates on $\hat v(i)$  and $\bar v^{\delta}[i]$. 

First we  show a lemma which allows to estimate the discrete Laplacian of $\hat v$ at $i$ (which involves
$i,i+1$ and $i-1$) by something that depends only on the obstacles above $i.$ 
\begin{lem}\label{cdl.lem.esti1}
Let $\hat v(i)$ defined as in the previous section, and define the discrete Laplacian as
$$\Delta_{d}\hat v(i):= \hat v(i+1)-2\hat v(i)+\hat v(i-1) =\big(\hat v(i+1)-\hat v(i)\big)-\big(\hat v(i) -\hat v(i-1)\big)$$
Then 
$$
-2\delta[v_{x}(i-1+\delta)-v_{x}(i-1-\delta)]\le \Delta_{d}\hat v(i)+
\hat F\le (1+2\delta)[v_{x}(i+\delta)-v_{x}(i-\delta)].$$
where $F(1-2(\delta+\eps))\le\hat F \le (1-2\delta)F$ for the  $\eps>0$ in Def. \ref{auxfield}.
\end{lem}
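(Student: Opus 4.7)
The plan is to derive a clean identity for $\Delta_d\hat v(i)+\hat F$ and then estimate its three terms using (i) the explicit ODE $v_{xx}=-F\chi_A^\eps$ on the free intervals $(j+\delta,j+1-\delta)$ and (ii) the convexity $v_{xx}\ge 0$ on the obstacle strips $(j-\delta,j+\delta)$. I first work on a single free interval. Since $\chi_A^\eps$ is symmetric about $j+1/2$, integration of the ODE gives
\begin{equation*}
v_x(j+1-\delta)=v_x(j+\delta)-\hat F,\qquad \hat F:=F\!\int_{j+\delta}^{j+1-\delta}\chi_A^\eps,
\end{equation*}
which is independent of $j$ and satisfies $F(1-2(\delta+\eps))\le \hat F\le (1-2\delta)F$. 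A second integration, again using symmetry to kill the odd moment of $\chi_A^\eps$, yields $v(j+1-\delta)=v(j+\delta)+(1-2\delta)v_x(j+\delta)-(1-2\delta)\hat F/2$. Combining these two identities produces the bridging formula
\begin{equation*}
\hat v(j+1)=v(j+1-\delta)+2\delta v_x(j+1-\delta)=v(j+\delta)+v_x(j+\delta)-\hat F(1+2\delta)/2.
\end{equation*}

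Next I work on an obstacle strip. Set $a_+(j):=v_x(j+\delta)-v_x(j-\delta)=\int_{j-\delta}^{j+\delta}v_{xx}(s)ds\ge 0$ and the weighted moment $M_+(j):=\int_{j-\delta}^{j+\delta}(j+\delta-s)v_{xx}(s)\,ds$. Taylor's formula with integral remainder gives $v(j+\delta)=v(j-\delta)+2\delta v_x(j-\delta)+M_+(j)=\hat v(j)+M_+(j)$, and since the kernel $(j+\delta-s)$ lies in $[0,2\delta]$ while $v_{xx}\ge 0$,
\begin{equation*}
0\le M_+(j)\le 2\delta\, a_+(j).
\end{equation*}

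Substituting the obstacle-strip expressions $v(i+\delta)=\hat v(i)+M_+(i)$ and $v_x(i+\delta)=v_x(i-\delta)+a_+(i)$ into the bridging formula with $j=i$ and the analogous one with $j=i-1$, I get
\begin{align*}
\hat v(i+1)-\hat v(i)&=M_+(i)+a_+(i)+v_x(i-\delta)-\hat F(1+2\delta)/2,\\
\hat v(i)-\hat v(i-1)&=M_+(i-1)+a_+(i-1)+v_x(i-1-\delta)-\hat F(1+2\delta)/2.
\end{align*}
Subtracting and simplifying $v_x(i-\delta)-v_x(i-1-\delta)=a_+(i-1)-\hat F$ (which comes from one slope relation across the free interval $(i-1+\delta,i-\delta)$ plus one across the obstacle strip at $i-1$), the $a_+(i-1)$ and $v_x$ terms collapse neatly and I obtain the key identity
\begin{equation*}
\Delta_d\hat v(i)+\hat F\;=\;M_+(i)-M_+(i-1)+a_+(i).
\end{equation*}
The upper bound follows by discarding $-M_+(i-1)\le 0$ and using $M_+(i)\le 2\delta a_+(i)$, giving $(1+2\delta)a_+(i)$. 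The lower bound follows by discarding $M_+(i),a_+(i)\ge 0$ and using $M_+(i-1)\le 2\delta a_+(i-1)$. Rewriting $a_+(j)=v_x(j+\delta)-v_x(j-\delta)$ recovers the lemma.

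The main delicate point is verifying the exact constant $(1+2\delta)/2$ in the bridging formula: it requires the symmetry of $\chi_A^\eps$ about the midpoint of each free interval to eliminate the odd-moment term, and if this is botched a spurious coefficient contaminates the identity and the clean cancellation giving $\Delta_d\hat v(i)+\hat F=M_+(i)-M_+(i-1)+a_+(i)$ fails. Everything else is routine Taylor/sign bookkeeping.
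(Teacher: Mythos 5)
Your proof is correct, and at heart it is the same argument as the paper's: expand $\Delta_d\hat v(i)$ using the tangent-line discretization across the free intervals, then exploit the ODE $v_{xx}=-F\chi_A^\eps$ on the free intervals and convexity on the obstacle strips. The genuine (and pleasant) difference is that you extract a clean exact identity, $\Delta_d\hat v(i)+\hat F = M_+(i)-M_+(i-1)+a_+(i)$, by writing the obstacle-strip contribution as an explicit integral-remainder moment $M_+(j)=\int_{j-\delta}^{j+\delta}(j+\delta-s)v_{xx}\,ds$ with $0\le M_+(j)\le 2\delta\,a_+(j)$, and only then drop the sign-definite pieces. The paper instead bounds the corresponding pieces one at a time using convexity inequalities (e.g. $v(i+\delta)-\hat v(i)\le 2\delta\,a_+(i)$ and $\hat v(i-1)\le v(i-1+\delta)$), never isolating a single identity. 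Your packaging is a bit tidier and makes the provenance of the constants $(1+2\delta)$ and $-2\delta$ transparent.

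One point of exposition worth flagging: your final remark that the exact coefficient $(1+2\delta)/2$ in the bridging formula — and hence symmetry of $\chi_A^\eps$ about the midpoint of each free interval — is ``the main delicate point'' is misleading. The paper only assumes $\chi_{A_\eps}\le\chi_A^\eps\le\chi_A$ and $1$-periodicity of $\chi_A^\eps$ in $x$, not symmetry. Fortunately, your argument does not actually need symmetry: the additive constant in the bridging formula $\hat v(j+1)=v(j+\delta)+v_x(j+\delta)-c$ (whatever its value) is $j$-independent by periodicity alone, and it cancels when you form the second difference $\Delta_d\hat v(i)$. The $\hat F$ in your identity comes entirely from the single slope relation $v_x(i-\delta)-v_x(i-1+\delta)=-\hat F$, not from the double-integral term. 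So the identity $\Delta_d\hat v(i)+\hat F=M_+(i)-M_+(i-1)+a_+(i)$ holds without any symmetry hypothesis, and you should drop that assumption rather than elevate it to a delicate point.
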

Note that our discretization, using the tangents, implies that the discrete Laplacian does not
necessarily satisfy the same lower bound as the Laplacian of the original path.
\dem{Proof:} {\bf Step One : Upper Bound}\\
 As a preparation, let us recall some formulas satisfied by $v$.\\
Since $v$ satisfies \eqref{cdl.eq.outside.strip}, we have for all $i\in \Z$
\begin{align}
&v_{x}(i+1-\delta)-v_{x}(i+\delta)=-F\int_{i+\delta}^{i+1-\delta}\chi_A^{\eps}(x)dx \label{cdl.eq.esti1}\\
&v(i+1-\delta)-v(i+\delta)=(1-2\delta)v_{x}(i+\delta)-F\int_{i+\delta}^{i+1-\delta}\left(\int_{i+\delta}^{s}\chi_A^{\eps}(x)dx\right)\,ds \label{cdl.eq.esti2}
\end{align}
Let us define $$\hat F:=F\int_{i+\delta}^{i+1-\delta}\chi_A^{\eps}(x)dx. $$
Observe that since  $\chi_A^{\eps}(x+p)=\chi_A^{\eps}(x)$ for all integer $p$, $\hat F$ is independant of $i\in \Z$. Moreover
$$F(1-2(\delta+\eps))\le \hat F\le (1-2\delta)F$$ since $\chi_{A_{\eps}}(x)\le \chi_A^{\eps}(x)\le\chi_A(x) $.

Using now \eqref{cdl.eq.esti1}, the definition of $\hat v(i+1)$ and \eqref{cdl.eq.esti2} we see that  
\begin{align*}
 \hat v(i+1)&=v(i+\delta)+v_{x}(i+\delta)-F\int_{i+\delta}^{i+1-\delta}\left(\int_{i+\delta}^{s}\chi_A^{\eps}(x)dx\right)\,ds+2\delta (v_{x}(i+1_\delta)-v_{x}(i+\delta))\\
 &=v(i+\delta)+v_{x}(i+\delta)-F\int_{i+\delta}^{i+1-\delta}\left(\int_{i+\delta}^{s}\chi_A^{\eps}(x)dx\right)\,ds-2\delta \hat F.
 \end{align*}
 Therefore, 
 \begin{equation}\label{cdl.eq.esti3}
 \hat v(i+1)-\hat v(i)= v(i+\delta)+v_{x}(i+\delta)-F\int_{i+\delta}^{i+1-\delta}\left(\int_{i+\delta}^{s}\chi_A^{\eps}(x)dx\right)\,ds-2\delta \hat F  -\hat v(i).
 \end{equation} 
 Observe that since  $\chi_A^{\eps}(x+p)=\chi_A^{\eps}(x)$ for all integer $p$ we have 
  $$F\int_{i+\delta}^{i+1-\delta}\left(\int_{i+\delta}^{s}\chi_A^{\eps}(x)dx\right)\,ds+2\delta \hat F =F \int_{i-1+\delta}^{i-\delta}\left(\int_{i-1+\delta}^{s}\chi_A^{\eps}(x)dx\right)\,ds+2\delta \hat F.$$
 Hence, from the definition of the discrete laplacian and using \eqref{cdl.eq.esti3} it follows that  
 \begin{equation}
 \Delta_d \hat v(i)= v(i+\delta)+v_{x}(i+\delta)-\hat v(i) - v(i-1+\delta)-v_{x}(i-1+\delta)+\hat v(i-1)\label{cdl.eq.esti4}
 \end{equation}
 
 Using now the definition of $\hat v(i)$ and the convexity of $v$  in $(i-\delta,i+\delta)$ for all $i\in\Z$ we see that 
    \begin{align*}
 &v(i+\delta)+v_{x}(i+\delta)-\hat v(i) \le  v_{x}(i+\delta)+ 2\delta (v_{x}(i+\delta)-v_{x}(i-\delta))\\
 &- v(i-1+\delta)+\hat v(i-1) \le 0.
 \end{align*}
 Hence, 
 $$  
 \Delta_d \hat v(i)\le  (1+2\delta)v_{x}(i+\delta)-2\delta v_{x}(i-\delta)-v_{x}(i-1+\delta).$$
Using now \eqref{cdl.eq.esti1} it follows that 
$$  
 \Delta_d \hat v(i)\le  (1+2\delta)(v_{x}(i+\delta)- v_{x}(i-\delta)) -\hat F.$$

{\bf Step two: Lower bound}

 From  formula \eqref{cdl.eq.esti4}  we have 
\begin{equation}
\Delta_d \hat v(i)=v(i+\delta) -\hat v(i) +v_{x}(i+\delta)-v(i-1+\delta) +\hat v(i-1) -v_{x}(i-1+\delta)
\end{equation}

Since $v$ is convex in $(i-\delta,i+\delta)$, we have  $v(i+\delta) -\hat v(i) \ge 0$ and $v_{x}(i+\delta)\ge v_{x}(i-\delta)$. Therefore we have 
\begin{equation}
\Delta_d \hat v(i)\ge  v_{x}(i-\delta)-v_{x}(i-1+\delta)-v(i-1+\delta) +\hat v(i-1). 
\end{equation}
Using now \eqref{cdl.eq.esti1}, the convexity of $\hat v$ in $(i-1-\delta,i-1+\delta)$ and the definition of $\hat v(i-1)$ it follows that 
 \begin{equation}
\Delta_d \hat v(i)\ge -\hat F- 2\delta [v_{x}(i-1+\delta) -v_{x}(i-1-\delta)]. 
\end{equation}
\fdem

Now we proceed to estimate  the change of the discrete gradient 
$$k(i):=v_{x}(i+\delta)-v_{x}(i-\delta)$$ in terms of the obstacle strengths above $i.$
Observe that  always $k\ge 0$ by convexity.
If the gradients are very steep, the path will pass through several obstacles above the interval
$[i-\delta,i+\delta].$ The number of obstacles passed and the time spent in each
of them (i.e. the Lebesgue measure of its image under the inverse mapping) 
can be estimated in terms of $v'(i-\delta)$ and $v'(i+\delta).$  



\begin{lem}\label{cdl.lem.esti3}
Let $v$ be a blocked path, $i\in\Z$ and and assume that $k(i)>0$.
Set $M:=sup\{|v_{x}(i-\delta)|;|v_{x}(i+\delta)|\}$ then we have
$$k(i)\le \frac{18\delta}{M}\sum_{\hat v(i)-4\delta M\le j\le \hat v(i)+4\delta M}l(i,j)$$
\end{lem}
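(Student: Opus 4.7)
My plan is to integrate the ODE satisfied by the blocked path $v$ on the strip $(i-\delta,i+\delta)$ against the weight $|v_x|$. This produces an ``energy'' identity whose left-hand side is naturally bounded below by a constant times $M\cdot k(i)$, which yields the prefactor $M^{-1}$ and reduces the problem to bounding the contribution of each obstacle by $O(\delta)$.

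First I identify which obstacles can contribute. Since $v_{xx}\ge 0$ on $(i-\delta,i+\delta)$, convexity gives $|v_x|\le M$ throughout, so $v$ oscillates by at most $2\delta M$ on the strip. Combined with $\hat v(i)=v(i-\delta)+2\delta v_x(i-\delta)$ (so $|\hat v(i)-v(i-\delta)|\le 2\delta M$), the graph of $v$ over the strip lies inside $[i-\delta,i+\delta]\times[\hat v(i)-4\delta M,\hat v(i)+4\delta M]$. Hence only those $j\in\Z^*$ with $|j-\hat v(i)|\le 4\delta M$ can make $\phi_{i,j}(x,v(x))$ nonzero, matching the summation range in the statement.

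Next I multiply $v_{xx}=\sum_{j}l(i,j)\phi_{i,j}(x,v(x))$ by $|v_x|$ and integrate over the strip. Since $v_x$ is nondecreasing (by convexity), either it keeps a constant sign, in which case direct integration gives
\[\int_{i-\delta}^{i+\delta} v_{xx}|v_x|\,dx=\tfrac12\bigl(|v_x(i+\delta)|+|v_x(i-\delta)|\bigr)\,k(i)\ge\tfrac{M}{2}k(i),\]
or $v_x$ changes sign once at some $x^*$, in which case splitting at $x^*$ yields
\[\int_{i-\delta}^{i+\delta} v_{xx}|v_x|\,dx=\tfrac12\bigl(v_x(i-\delta)^2+v_x(i+\delta)^2\bigr)\ge\tfrac12 M^2\ge\tfrac14 Mk(i),\]
using $k(i)=|v_x(i-\delta)|+v_x(i+\delta)\le 2M$ in the sign-change case. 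In every case the LHS exceeds $\tfrac14 M\,k(i)$.

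For each $j$ in the relevant range, I bound the RHS contribution by the change of variables $s=v(x)$ on each monotonic piece of $v$ in the strip: since $\phi_{i,j}$ is supported in $\{|s-j|\le\delta\}$, each piece contributes at most $\|\phi\|_\infty\cdot 2\delta$, and there are at most two monotonic pieces, so $\int \phi_{i,j}(x,v(x))|v_x|\,dx\le 4\delta\|\phi\|_\infty$. Combining with the lower bound gives $k(i)\le\frac{16\delta\|\phi\|_\infty}{M}\sum_{|j-\hat v(i)|\le 4\delta M}l(i,j)$, from which the stated constant $18$ follows once $\|\phi\|_\infty$ is absorbed. The main technical point is the sign-change case, where the energy estimate becomes quadratic in $M$ rather than linear; the trick $k(i)\le 2M$ is what converts this back into the required $\tfrac14 Mk(i)$ bound, and the two-piece change of variables ensures each obstacle still costs only $O(\delta)$ even though $v$ is not monotonic on the strip.
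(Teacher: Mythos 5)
Your proof is correct, and it is a genuinely different route from the one in the paper. The paper (Steps~2--3) works pointwise with the ``time spent'' $S_j=|\{x: v(x)\in[j-\delta,j+\delta]\}|$: it notes that $v_{xx}\le l(i,j)$ on obstacle~$j$ (implicitly normalising $\|\phi\|_\infty\le 1$), so $k(i)\le\sum_j l(i,j)S_j$, and then restricts attention to a sub-interval $\hat I(i)$ on which $|v_x|\ge M/3$; on $\hat I(i)$ each $\hat S_j$ is a single interval of length $\le 6\delta/M$, while the gradient change there is still at least $k(i)/3$, giving the $18\delta/M$ prefactor. You instead integrate the ODE against the weight $|v_x|$: the left side becomes an explicit quadratic expression in the endpoint slopes (an energy), bounded below by $\tfrac14 Mk(i)$ in all cases once the sign-change scenario is handled via $k(i)\le 2M$, while on the right the substitution $s=v(x)$ converts each obstacle's contribution into $\int\phi_{i,j}\,ds\le 2\delta\|\phi\|_\infty$ per monotone branch. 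Both arguments exploit exactly the same structure --- convexity of $v$ on the strip (so $v_x$ is monotone, has at most one sign change, and $|v_x|\le M$), and the $4\delta M$ vertical confinement of the graph --- but your energy identity replaces the paper's somewhat ad hoc choice of a sub-interval $\hat I(i)$ and its attendant factors of $3$ by a single clean integration by parts, which is why you land on the marginally better prefactor $16\|\phi\|_\infty$ rather than $18$. The one hand-wave is the phrase ``once $\|\phi\|_\infty$ is absorbed'': the constant $18$ in the lemma is only literal under a normalisation $\|\phi\|_\infty\le 1$, which the paper also uses implicitly in writing $v_{xx}\le l(i,j)$; so this is a shared, not a new, imprecision.
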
 
\dem{Proof:}

Step 1:
As $v$ is convex on $[i-\delta,i+\delta],$ the gradient is monotone, hence $|v_x(x)|\le M$
for all $x\in I(i):=[i-\delta,i+\delta].$ As a consequence, we have on $I(i)$
$$
v(i)-\delta M\le v(x)\le v(i)+ \delta M,
$$ As $|\hat v(i)-v(i-\delta)| \le 2\delta M,$ $|v(i)-v(i-\delta)|\le \delta M,$ we obtain 
$$
|v(x)-\hat v(i)|\le 4\delta M\quad {\rm on }  [i-\delta,i+\delta].
$$

Step 2.
Define the time spent by the path in the $j$-th obstacle above $i$ as 
$$S_j:=\big|\{x:\ v(x)\in [j-\delta,j+\delta] \}\big|,
$$
where $|A|$ denotes the Lebesgue measure of the set $A$ and $j\in Z^*=1/2+\Z.$ Note that by convexity
$v_x$ changes sign at most once, hence each $S_j$ is the union of at most two intervals, moreover
$S_j=\emptyset$ if $|j-\hat v(i)|>4\delta M$
Hence, as for $x\in I(i)$ $v_{xx}(x) \le l(i,j)$ on obstacle $j$ and zero else, 
$$
v_x(i+\delta)-v_x(i-\delta)\le \sum_{\hat v(i)-4\delta M\le j\le \hat v(i)+4\delta M}l(i,j)S_j.
$$where $j\in \Z^*$

Step 3. Note that $k\le 2M.$ 
As the gradient is monotone on $I(i),$ there exists a $\hat \tau$ such that 
$|v_x(\hat \tau)|=M-k/3$ and $|v_x(x)|\ge M-k/3\ge M/3\ge 0$ on $\hat I(i),$ where

$$
\hat I(i)=\left\{
\begin{array}{ll} [\hat \tau,i+\delta]  & {\rm if}\ M=|v_x(i+\delta)|  \\ 
{} [i-\delta,\hat \tau]  & {\rm if}\ M=|v_x(i-\delta)|. \end{array}
\right.
$$
As the gradient does not change sign on $\hat I (i),$ the sets $\hat S_j:=S_j\cap \hat I(i)$ are intervals.
Moreover, 
$$|\hat S_j|\le \frac{2\delta}{M/3}=\frac{6\delta}{M}
$$ as $|v_x|\ge M/3$ on $\hat I(i).$
Hence
$$
\frac{k}{3}=M-v_x(\hat \tau)\le \sum_{\hat v(i)-4\delta M\le j\le \hat v(i)+4\delta M}l(i,j)\hat S_j\le \frac{6\delta}{M}
\sum_{\hat v(i)-4\delta M\le j\le \hat v(i)+4\delta M}l(i,j)
$$
and the result follows.
\fdem
\begin{remark}\label{cdl.rem.m(i)}
Note that in the case where $k(i)\ge 1$  then the corresponding $M(i)\ge \frac{1}{2}$. Indeed, by definition of $M(i)$ and $k(i)$ we have $2M(i)\ge |v_{x}(i-\delta)|+|v_{x}(i+\delta)|\ge v_{x}(i+\delta) -v_{x}(i-\delta)=k(i)\ge 1 ,$
i.e. $M(i)\ge 1/2.$ 
\end{remark}

Combining now Lemmas \ref{cdl.lem.esti1} and \ref{cdl.lem.esti3}  we deduce the following estimates, which
allow to estimate the discrete Laplacian of the blocking path $(\bar v^{\delta}[j])_{j\in [-N,N]\cap\Z}$ at a site $i$ against a normalized sum of random variables.

\begin{lem}\label{cdl.lem.esti5}
Let $v$ be a blocked path. 
Then for all $i \in [-N+\delta,N-\delta]\cap \Z$ there exists  $M(i), M(i-1)>\frac{1}{2}$ such that following  holds
  \begin{align*}
   \Delta_d \bar v(i) +\bar F  &\le(1+2\delta)\left[\frac{360\delta^2}{2\delta(4 M(i)+\frac{1}{2})}\sum_{\bar v(i)-\delta(4 M(i)+\frac{1}{2})\le j\le \bar v(i) +\delta(4 M(i)+\frac{1}{2})}l(i,j)(\o)\right]\\
   &\ge -2\delta\left[\frac{360\delta^2}{2\delta(4 M(i-1)+\frac{1}{2})}\sum_{\bar v(i-1)-\delta(4 M(i-1)+\frac{1}{2})\le j\le \bar v(i-1) +\delta(4 M(i-1)+\frac{1}{2})}l(i-1,j)(\o)\right] ,
  \end{align*}
where $\bar F:=\hat F -(1+2\delta)$.
\end{lem}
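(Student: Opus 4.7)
The plan is to combine Lemma \ref{cdl.lem.esti1} (which bounds $\Delta_d \hat v(i)+\hat F$ in terms of the gradient jumps $k(i),k(i-1)$) with Lemma \ref{cdl.lem.esti3} (which bounds $k(i)$ by a weighted sum of obstacle strengths above column $i$), and then to transfer the resulting estimates from the tangent discretization $\hat v$ to the lattice discretization $\bar v^\delta$. The rounding definition of $\bar v^\delta[i]$ gives the uniform bound $|\bar v^\delta[i]-\hat v(i)|\le \delta/2$, hence $|\Delta_d \bar v^\delta(i)-\Delta_d\hat v(i)|\le 2\delta$; combined with the choice $\bar F=\hat F-(1+2\delta)$ this yields
\[
\Delta_d \bar v(i)+\bar F \le \Delta_d \hat v(i)+\hat F-1,\qquad \Delta_d \bar v(i)+\bar F \ge \Delta_d \hat v(i)+\hat F-1-4\delta.
\]

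For the upper bound I would substitute $\Delta_d\hat v(i)+\hat F\le (1+2\delta)k(i)$ from Lemma \ref{cdl.lem.esti1} into the first inequality above, obtaining $\Delta_d\bar v(i)+\bar F\le (1+2\delta)k(i)-1$. When $k(i)\le 1/(1+2\delta)$ this is already nonpositive, so the inequality holds trivially for any admissible $M(i)>1/2$ since the target right-hand side is a nonnegative sum. Otherwise $M_0(i):=\max(|v_x(i-\delta)|,|v_x(i+\delta)|)\ge k(i)/2>1/4$, and choosing $M(i):=\max(M_0(i),1/2)$ allows an application of Lemma \ref{cdl.lem.esti3}. Two routine adjustments then close the bound: using $|\bar v^\delta[i]-\hat v(i)|\le \delta/2$ and $l(i,j)\ge 0$, the summation range $|j-\hat v(i)|\le 4\delta M_0(i)$ is enlarged to $|j-\bar v^\delta[i]|\le \delta(4M(i)+1/2)$; and the elementary inequality
\[
\frac{18\delta}{M_0(i)} \le \frac{180\delta}{4M(i)+1/2}=\frac{360\delta^2}{2\delta(4M(i)+1/2)},
\]
which reduces to $M_0(i)\ge 1/12$ (hence certainly holds in this case), upgrades the prefactor to the one appearing in the statement. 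The leftover $-1$ is discarded because the right-hand side is nonnegative.

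The lower bound is proven by the mirror argument, starting from $\Delta_d\hat v(i)+\hat F\ge -2\delta k(i-1)$ and applying Lemma \ref{cdl.lem.esti3} at site $i-1$, with the summation range widened around $\bar v^\delta[i-1]$. The main technical obstacle throughout is the bookkeeping around the threshold $k\approx 1$: Lemma \ref{cdl.lem.esti3} is informative only when $k>0$, Remark \ref{cdl.rem.m(i)} only supplies $M\ge 1/2$ when $k\ge 1$, and the passage from $\hat v$ to $\bar v^\delta$ introduces the additive constants $-1$ (upper) and $-1-4\delta$ (lower) that are absent from the statement. The resolution in each direction is the same case-split around $k\sim 1$: below the threshold the inequality is vacuous because the discretization constant already dominates the left-hand side, while above the threshold the slack $\tfrac{180\delta}{4M+1/2}-\tfrac{18\delta}{M}>0$ for $M$ of order one is enough to swallow the additive constant inside the nonnegative right-hand side of Lemma \ref{cdl.lem.esti5}.
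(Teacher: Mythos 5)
Your approach for the upper bound matches the paper's exactly: combine Lemma \ref{cdl.lem.esti1} with Lemma \ref{cdl.lem.esti3}, pass from $\hat v$ to $\bar v^\delta$ via $|\bar v^\delta[i]-\hat v(i)|\le \delta/2$, enlarge the summation window, and enlarge the prefactor from $18\delta/M_0(i)$ to $180\delta/(4M(i)+\tfrac12)$. Your bookkeeping is in fact slightly cleaner than the paper's: you keep the $-1$ coming from $\bar F=\hat F-(1+2\delta)$ explicit, observe $(1+2\delta)k(i)-1\le 0$ when $k(i)\le 1/(1+2\delta)$, and otherwise discard the $-1$ against the nonnegative right-hand side, while the paper's chain (insert $k(i)\le 1+\dots$ into $\Delta_d\bar v\le (1+2\delta)k(i)-\hat F+2\delta$) leaves a residual $+2\delta$ that it drops silently.

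The lower bound, however, is not the mirror image you claim. For the upper bound the residual $-1$ is harmless because it is subtracted from a left-hand side that only needs to be $\le$ a nonnegative quantity. For the lower bound the roles flip: from Lemma \ref{cdl.lem.esti1} and the discretization you get $\Delta_d\bar v(i)+\bar F\ge -2\delta k(i-1)-(1+4\delta)$, and the residual $-(1+4\delta)$ is now being subtracted from a quantity that must be shown $\ge$ a \emph{non}positive right-hand side. When $k(i-1)$ is small the left-hand side is only bounded below by roughly $-1-6\delta$, while the right-hand side $-2\delta\bigl[\tfrac{180\delta}{4M(i-1)+1/2}\sum l(i-1,j)\bigr]$ can be arbitrarily close to $0$, so "vacuous below threshold" fails. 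Above threshold the slack $\tfrac{180\delta}{4M+1/2}-\tfrac{18\delta}{M_0}$ is $O(\delta)$, and multiplying by the sum gives an error margin of order $\delta\cdot\sum$, which is not guaranteed to dominate the fixed residual $1+4\delta$ (one would need $\sum\gtrsim \delta^{-2}$). So the constant cannot be absorbed the way you describe. The paper's own proof elides the lower bound ("treated in a similar way"), which hides this same difficulty; as written, the lower-bound inequality appears to require a small additive correction (or a slightly more negative $\bar F$) that the statement does not display, and your proposal does not supply a genuine fix for it.
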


\dem{Proof:} Let us first start with the proof of the upper bound.
Observe first that  
$$ \hat v(i)-\frac{\delta}{2}\le \bar v^{\delta}[i]\le \hat v(i)+\frac{\delta}{2},$$
which implies that  
$$ \Delta_d \hat v^{\delta}[i] -2\delta \le \Delta_d \bar v^{\delta}[i]\le \Delta_d \hat v^{\delta}[i] +2\delta.$$
Therefore using Lemma \ref{cdl.lem.esti1} we have 
\begin{equation}\label{cdl.eq.esti-laplaced3}
\Delta_d \bar v^{\delta}[i]\le (1+2\delta)k(i) -\hat F +2\delta.
\end{equation}
with $k(i)>0$.
  By Lemma \ref{cdl.lem.esti3} and Remark \ref{cdl.rem.m(i)}, for $k(i)\ge 1$ there exists $M(i)\ge \frac{1}{2}$ so that
$$k(i)\le\frac{18\delta}{M(i)}\sum_{\hat v(i)-4\delta M(i)\le j\le \hat v(i) +4\delta M(i)}l(i,j)(\o).$$
So we easily see that
\begin{equation}\label{cdl.eq.esti-laplaced4}
k(i)\le\frac{18\delta^2(4M(i)+\frac{1}{2})}{M(i)(4M(i)+\frac{1}{2})\delta}\sum_{\bar v^{\delta}[i]-(4 M(i)+\frac{1}{2})\delta\le j\le \bar v^{\delta}[i] +(4 M(i)+\frac{1}{2})\delta}l(i,j)(\o).
\end{equation}
Therefore,  since $M(i)>\frac{1}{2}$ we have
\begin{equation}\label{cdl.eq.esti-laplaced5}
k(i)\le\frac{180\delta^2}{(4M(i)+\frac{1}{2})\delta}\sum_{\bar v^{\delta}[i]-(4 M(i)+\frac{1}{2})\delta\le j\le \bar v^{\delta}[i] +(4 M(i)+\frac{1}{2})\delta}l(i,j)(\o).
\end{equation}

Hence, for all $k(i)\ge 0$, we have 
$$k(i)\le 1+ \frac{180\delta^2}{(4M(i)+\frac{1}{2})\delta}\sum_{\bar v^{\delta}[i]-(4 M(i)+\frac{1}{2})\delta\le j\le \bar v^{\delta}[i] +(4 M(i)+\frac{1}{2})\delta}l(i,j)(\o).$$
and the estimate follows .
The lower bound is treated in a similar way.
\fdem
\section{Probabilistic Estimates}

We first recall a standard fact for the Laplace transform of independent exponential random variables and random variables with distribution function bounded by an exponential.
\begin{lem}\label{cdl.lem.prob1}
\begin{enumerate}
\item
Let $\{X_i\}_{i\in N}$ be independent identically distributed 
random variables such that for a parameter $\lambda_0$
and a constant $C>0$
\begin{equation}\label{cdl.eq.expbound}
{\mathbb P}[X_0>r]\le Ce^{-\lambda_0 r}.
\end{equation}
Then we have for any $\lambda<\lambda_0$ and $L\in \N,$ $L\ge 2,$
\begin{eqnarray}\label{cdl.eq.L=1}
\E\left[ e^{\lambda X_1}\right]&\le&C\frac{\lambda_0}{\lambda_0-\lambda}\\ \label{cdl.eq.Lnot1}
\E\left[ e^{\lambda \sum_{i=1}^L X_i}\right]&\le&C^L\left(\frac{\lambda_0}{\lambda_0-\lambda}\right)^L\\
\label{cdl.eq.Lge1}
\E\left[ e^{\lambda \left(\frac{1}{L}\sum_{i=1}^L X_i\right)}\right]&\le&C^Le^{\lambda \frac{4\ln(4/3)\lambda}{3\lambda_0}}
\quad {\rm \ for\ } L\ge 2 ,\lambda\in (2/3\lambda_0,\lambda_0) 
\end{eqnarray}
\item Let $\{X_i\}_{i\in N}$ be independent exponential random variables with parameter $\lambda_0>0.$
Then (\ref{cdl.eq.L=1})-(\ref{cdl.eq.Lnot1}) hold as equalities with $C=1,$ while 
 (\ref{cdl.eq.Lge1}) holds as inequality with $C=1.$
\end{enumerate}
\end{lem}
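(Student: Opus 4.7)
The plan is to reduce all three inequalities to a single MGF bound on $\E[e^{\mu X_1}]$ and then invoke independence. For (\ref{cdl.eq.L=1}), I would use the layer-cake identity
\[
\E[e^{\lambda X_1}] = \int_0^\infty \P\bigl(e^{\lambda X_1}>s\bigr)\,ds,
\]
splitting at $s=1$. The part on $[0,1]$ contributes at most $1$. On $[1,\infty)$, $\P(e^{\lambda X_1}>s)=\P(X_1>\log s/\lambda)\le C s^{-\lambda_0/\lambda}$ by hypothesis (\ref{cdl.eq.expbound}), and this is integrable precisely because $\lambda<\lambda_0$. Carrying out the integration and combining the two pieces (using $C\ge 1$, which may be assumed since otherwise one can simply replace $C$ by $1$) gives $\E[e^{\lambda X_1}]\le C\lambda_0/(\lambda_0-\lambda)$. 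In the exponential case I would instead just integrate the density directly, obtaining equality with $C=1$.

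Inequality (\ref{cdl.eq.Lnot1}) is then immediate from independence,
\[
\E\!\left[e^{\lambda\sum_{i=1}^L X_i}\right]=\prod_{i=1}^L\E\!\left[e^{\lambda X_i}\right],
\]
together with (\ref{cdl.eq.L=1}) applied to each factor. For the normalised sum (\ref{cdl.eq.Lge1}), I would apply (\ref{cdl.eq.Lnot1}) with $\lambda/L$ in place of $\lambda$ (legal since $\lambda/L<\lambda_0$), yielding
\[
\E\!\left[e^{\lambda\cdot\frac{1}{L}\sum_{i=1}^L X_i}\right]\le C^L\Bigl(1-\frac{\lambda}{L\lambda_0}\Bigr)^{-L}.
\]
The remaining task is to convert this factor into the claimed sub-exponential form. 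Writing $u=\lambda/(L\lambda_0)$, it amounts to estimating $-L\log(1-u)$. Since $-\log(1-\cdot)$ is convex and vanishes at $0$, on any interval $[0,u_0]$ the chord bound gives $-\log(1-u)\le u\,(-\log(1-u_0))/u_0$. The hypotheses $\lambda\in(2\lambda_0/3,\lambda_0)$ and $L\ge 2$ keep $u$ in an interval strictly separated from $1$; choosing the chord parameter $u_0$ so that the resulting multiplicative constant equals $4\log(4/3)/3$ produces the stated bound. The exponential case with $C=1$ is inherited by exactly the same calculation, since only the MGF factor changes.

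The main obstacle is essentially bookkeeping in this last step: matching the explicit numerical constant $4\log(4/3)/3$ in the exponent requires picking the optimal chord, and one has to check that the prescribed regime $\lambda\in(2\lambda_0/3,\lambda_0)$, $L\ge 2$ is exactly what makes this choice admissible. Everything else --- layer-cake, Fubini, the product decomposition of the MGF under independence, and direct integration of the exponential density --- is routine and proceeds mechanically.
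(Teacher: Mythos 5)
Your approach mirrors the paper's: for~(\ref{cdl.eq.L=1}) you integrate the tail bound (the layer-cake identity split at $s=1$ is equivalent to the paper's Riemann--Stieltjes integration by parts), (\ref{cdl.eq.Lnot1}) follows by independence, the exponential case is direct integration, and for~(\ref{cdl.eq.Lge1}) both you and the paper use a chord (concavity) estimate for $\ln(1-x)$. Your explicit remark that one may take $C\ge1$ (otherwise replace $C$ by $1$) is a welcome bit of care --- the inequality $1+C\lambda/(\lambda_0-\lambda)\le C\lambda_0/(\lambda_0-\lambda)$ really does require $C\ge1$, and the lemma as stated is false for $C<1$ (take $X\equiv0$).

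However, there is one genuine gap in your sketch of (\ref{cdl.eq.Lge1}), and it is worth flagging because you assert it will ``proceed mechanically'' when in fact it cannot. You propose to choose $u_0$ so that the chord slope $-\log(1-u_0)/u_0$ equals $4\log(4/3)/3\approx0.384$. No such $u_0$ exists: since $-\log(1-u)$ is convex, vanishes at $0$, and has derivative $1$ there, one has $-\log(1-u_0)/u_0\ge1$ for every $u_0\in(0,1)$. Thus any chord bound $-\log(1-u)\le c\,u$ has $c\ge1$, and the constant $4\log(4/3)/3<1$ is unreachable. In the regime $\lambda\in(\tfrac{2}{3}\lambda_0,\lambda_0)$, $L\ge2$ you have $u=\lambda/(L\lambda_0)\in(0,\tfrac12)$, so the honest chord bound on $[0,\tfrac12]$ gives $-L\log(1-u)\le 2\log 2\cdot\lambda/\lambda_0$, i.e.\ $\E\bigl[e^{\lambda\frac{1}{L}\sum X_i}\bigr]\le C^L e^{2\log 2\cdot\lambda/\lambda_0}$. (The paper's own proof contains the same slip: the displayed inequality $\ln(1-x)\ge\frac{4}{3}x\ln(3/4)$ on $[0,3/4]$ is false --- the correct chord endpoint gives $\ln(1/4)$, not $\ln(3/4)$ --- so the stated constant appears to be a typo.) This does not affect anything downstream, since (\ref{cdl.eq.Lge1}) is only invoked through Corollary~\ref{corprob}, which merely asserts the existence of some constants; but you should not present the matching of that particular constant as routine, because it does not in fact match.
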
 
\dem{Proof:} We first show 2.
The first equality is standard, the second follows by using independence. For the third,
note that by concavity of $\ln(1-x)$ on $[0,3/4]$ 
$$\ln(1-x)\ge \frac{4}{3}x \ln(3/4)\ {\rm for}\ x\in \left[0,\frac{3}{4}\right]$$ Using independence and this concavity estimate 
with $x=\lambda_0/(\lambda L)$
$$
\E\left[ e^{\lambda \frac{1}{L}\sum_{i=1}^L X_i}\right]=\left(\frac{\lambda_0}{\lambda_0-\frac{\lambda}{L}}\right)^L
=e^{-L\ln\left(1-\frac{\lambda}{\lambda_0L}\right)}\le e^{\ln(4/3)\frac{4\lambda}{3\lambda_0}}.
$$

In order to show 2., it is sufficient to prove the first inequality, the others then follow as in the previous case.
For (\ref{cdl.eq.L=1}) note that the expectation of a random variable is the Riemann-Stieltjes integral with the
distribution function as integrator. Now integrate by parts and use that the integrand $e^{\lambda x}$ 
is monotone.

\fdem

\begin{remark} \label{cdl.rem.esti.expectation}
Observe that the above estimate on the Laplace transform of $S_L$ is independent of $L$. 
\end{remark}

Let us define $\tilde S_M$ by $$\tilde S_M(\o)(i,j):= \sum_{-M\le j-l\le M}l(i,l).$$
The  we have the following Corollary:

\begin{cor}\label{corprob}
For any discrete function $j(i):\Z \to\Z,$ the random variables $\{\tilde S_M(\o)(i,j(i))\}_{i\in \Z}$ 
are independent and identically distributed. Moreover,
there exist constants $C,\hat \lambda$ which depend only on $\lambda_0$ such that
$$
\P\left( \tilde S_M(\o)(i,j(i))>r\right)\le e^{C-\hat\lambda r}
$$
\end{cor}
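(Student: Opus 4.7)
The plan is to treat the two claims of the corollary separately; both flow directly from the structural assumptions on the family $\{l(i,l)\}_{(i,l)\in\Z\times\Z^*}$ together with Lemma~\ref{cdl.lem.prob1}(2).

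\textbf{Independence across $i$ and identical distribution.} By construction, $\tilde S_M(\o)(i,j(i))$ is a measurable function of the finite subfamily
$\{l(i,l):l\in\Z^*,\ |j(i)-l|\le M\}$. For distinct indices $i\neq i'$ the first coordinates differ, so the corresponding index subsets of $\Z\times\Z^*$ are disjoint; since the full family $\{l(i,l)\}$ is i.i.d., the sums indexed by different $i$ are functions of disjoint independent subfamilies, and are therefore mutually independent. For identical distribution, note that each $\tilde S_M(\o)(i,j(i))$ is a sum of the same fixed number $L$ (of order $M$) of i.i.d.\ exponential random variables of parameter $\lambda_0$, so its law is an Erlang distribution depending neither on $i$ nor on the choice of centre $j(i)$.

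\textbf{Exponential tail bound.} This is a standard Chernoff estimate. For any $\lambda\in(0,\lambda_0)$, Markov's inequality applied to $e^{\lambda\tilde S_M}$ gives
$$\P\bigl(\tilde S_M(\o)(i,j(i))>r\bigr)\le e^{-\lambda r}\,\E\bigl[e^{\lambda\tilde S_M(\o)(i,j(i))}\bigr].$$
By Lemma~\ref{cdl.lem.prob1}(2) the moment generating function factorises over the $L$ independent exponential summands and equals $(\lambda_0/(\lambda_0-\lambda))^{L}$. Choosing for concreteness $\lambda:=\lambda_0/2$ and setting $\hat\lambda:=\lambda_0/2$ and $C:=L\ln 2$ yields the advertised bound $\P(\tilde S_M>r)\le e^{C-\hat\lambda r}$.

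\textbf{Main issue.} There is essentially no technical obstacle here; the corollary is a direct repackaging of Lemma~\ref{cdl.lem.prob1}(2). The only conceptual point worth highlighting is that the deterministic phrasing --- valid for \emph{every} function $j:\Z\to\Z$ --- is the crucial feature for later use: in the application, the centres will be the random levels $\bar v^{\delta}[i](\o)$ of a blocked path, and the independence structure would be destroyed if the centre at site $i$ depended on the random variables $l(i,\cdot)$ entering the sum. Stating the estimate uniformly over all deterministic $j(\cdot)$ allows one, in Section~6, to first condition on a path (fixing $j$) and only then apply the tail bound together with a union bound over the possible discrete paths.
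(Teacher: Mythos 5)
Your treatment of independence and identical distribution is correct, and the Chernoff computation is formally valid, but it does not prove the statement as asserted: you set $C:=L\ln 2$, where $L$ is the number of summands (of order $M$), so your $C$ depends on $M$, whereas the corollary explicitly claims constants depending \emph{only} on $\lambda_0$. This is not a cosmetic point. For the literal, unnormalized sum $\tilde S_M=\sum_{|j-l|\le M}l(i,l)$ no $M$-uniform bound of the form $e^{C-\hat\lambda r}$ can hold, since $\E[\tilde S_M]$ grows linearly in $M$. The intended reading --- visible from the remark immediately preceding the definition of $\tilde S_M$ (``the above estimate on the Laplace transform of $S_L$ is independent of $L$'') and from the prefactor $\frac{1}{2\delta(4M(i)+\frac{1}{2})}$ attached to the sums in Lemma~\ref{cdl.lem.esti5} --- is that $\tilde S_M$ is an \emph{average}, carrying a normalization of order $1/M$, and the $M$-dependence in the application is genuinely variable (the path-dependent $M(\bar v^\delta)$), so an $M$-independent constant is essential for Lemma~\ref{cdl.lem.prob3}.

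The paper's proof therefore invokes (\ref{cdl.eq.Lge1}), the Laplace-transform bound for the normalized average $\frac{1}{L}\sum X_i$, which for exponential summands ($C=1$) is uniform in $L$, together with (\ref{cdl.eq.L=1}) to cover $L=1$, and applies the exponential Chebyshev inequality with $\lambda\in(2\lambda_0/3,\lambda_0)$. You used only (\ref{cdl.eq.L=1})--(\ref{cdl.eq.Lnot1}) with $\lambda=\lambda_0/2$ on the unnormalized sum, which is what forces the $L$-dependence into your constant. Replacing $(\lambda_0/(\lambda_0-\lambda))^L$ by the bound $e^{\lambda\frac{4\ln(4/3)\lambda}{3\lambda_0}}$ from (\ref{cdl.eq.Lge1}) yields $\hat\lambda=\lambda$ and $C=\lambda\frac{4\ln(4/3)\lambda}{3\lambda_0}$, both depending only on $\lambda_0$ as claimed.
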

\dem{Proof:} The first assertion is obvious. The second
is a consequence of (\ref{cdl.eq.L=1}) and (\ref{cdl.eq.Lge1}) and the exponential Chebyshev inequality
with a parameter $\lambda\in (2/3\lambda_0,\lambda_0).$
\fdem


Let us now estimate  the probability of a blocked path with boundary
conditions on $[-N,N]$ 
to be compatible with the $l(i,j)$. 
\begin{definition}\label{BlockedDirichlet} (blocked Dirichlet path)
\noindent
Let $v(-N+\delta)=v(N-\delta)=0.$ 

\noindent
Moreover, let $v$ solve 
(\ref{cdl.eq.outside.strip}) for $-N\le i\le N-1,$ and let
$v$ solve (\ref{cdl.eq.inside.strip}) for $-N+1\le i \le  N-1.$ 

\noindent
Extend $v$ to $[-N-\delta,N+\delta]$ by 
$$v(x)=v'(-N+\delta)(x+N-\delta) {\quad \rm on \quad}
[-N-\delta,-N+\delta]$$  and 
 $$v(x)=v'(N-\delta)(x-N+\delta){\quad \rm on \quad}
[N-\delta,N+\delta].$$
\end{definition}
\begin{remark}\label{remdiscrete}{\quad \hfill {\quad}}
\begin{enumerate}
\item 
Note that this path solves  (\ref{cdl.eq.inside.strip}) 
for $-N\le i \le  N$ if we set $l(i,j)=0$ for $i=-N$ or $i=N.$ 
\item
If $v\ge0$ on $[-N+\delta,N-\delta],$ then $$0\ge 
v(x)\ge -2\delta FN\quad {\rm for}\quad 
x\in  [-N-\delta,-N+\delta]\cup[N-\delta,N+\delta].$$
\end{enumerate}
\end{remark}

\begin{definition}
Let $\bar v^\delta:\ [-N,N]\cap \Z\to \delta\Z$ 
be a discrete path. We call the path {\em compatible} with a random
obstacle configuration if there exists a (not necessarily unique)
path as in Definition \ref{BlockedDirichlet} which is mapped to 
$\bar v^\delta$ 
under the discretization defined in Def. \ref{hat_v}.
\end{definition}
Note that the discrete path is fixed. Whether it is compatible or not depends on the configuration
of the random field.

\begin{lem}\label{cdl.lem.prob3}
Let $(\O,\f,\p)$ be a probability space and let 
$l(i,j)(\o)$ be i.i.d. exponential  random variables 
with parameter $ \lambda_0>0$ and let $\bar v^{\delta}$ be a discrete
path
with fixed boundary conditions
$$\bar v^{\delta}(-N+\delta)=0,\ \bar v^{\delta}(N)=b\  {\rm for\ some\ }
b\in [-FN,FN].$$
Then there exist constants 
$\hat C(\delta,\lambda_0),\ \lambda_1(\delta,\lambda_0)$ independent
of
$b$  
such that we have for $F$ sufficiently large 
$$
\P[\bar v^\delta\  {\rm compatible},\  \bar v^\delta(N)=b]
:=\P_b[\bar v^\delta\  {\rm compatible}]\le e^{N \hat C}
e^{-\lambda_1\sum_{-N+1}^{N-1} |\Delta_d\bar v^\delta(i)+\bar F|}.
$$ with $\bar F$ as in Lemma \ref{cdl.lem.esti5}
 \end{lem}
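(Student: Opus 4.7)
The strategy is to promote the deterministic estimate of Lemma \ref{cdl.lem.esti5} to a probabilistic one via exponential Chebyshev at each site, using the uniform-in-$L$ Laplace transform bound \eqref{cdl.eq.Lge1}, and then to exploit the column-wise independence of obstacle sums furnished by Corollary \ref{corprob}.

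By definition, compatibility of the fixed path $\bar v^\delta$ means there is a blocked Dirichlet path $v$ whose discretisation is $\bar v^\delta.$ For each such $v,$ Lemma \ref{cdl.lem.esti5} produces, at every $i\in\{-N+1,\dots,N-1\},$ some $M(i)\ge 1/2$ with
\[
(\Delta_d\bar v^\delta(i)+\bar F)_+ \le \tfrac{C_\delta}{L_i}\,\tilde S_{L_i}\!\bigl(i,\bar v^\delta(i)\bigr),\qquad (\Delta_d\bar v^\delta(i)+\bar F)_- \le \tfrac{C_\delta'}{L_{i-1}}\,\tilde S_{L_{i-1}}\!\bigl(i-1,\bar v^\delta(i-1)\bigr),
\]
where $L_i := 4M(i)+\tfrac12.$ Because $\bar v^\delta$ is fixed, the sign of $\Delta_d\bar v^\delta(i)+\bar F$ is deterministic, so at each $i$ only one of the two inequalities is effective. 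Applying the exponential Chebyshev inequality with $\lambda\in(2\lambda_0/3,\lambda_0)$ and invoking \eqref{cdl.eq.Lge1} (with the constant $C=1,$ as the $l(i,j)$ are genuine exponentials) gives the per-site bound, uniform in $L$,
\[
\P\!\left[\tfrac{1}{L}\tilde S_L(i,j)\ge y\right]\le e^{C_0}\,e^{-\lambda y}.
\]
The existential choice of $M(i)$ is dispatched by a union bound over the discrete admissible values of $L_i$; by the uniform-in-$L$ nature of the per-$L$ estimate, this contributes only a multiplicative constant per site, absorbed in $e^{N\hat C}.$

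By Corollary \ref{corprob}, the sums $\tilde S_L(i,\cdot)$ are independent across distinct $i,$ so the per-site bounds multiply. The only subtlety is that the upper bound at column $i$ and the lower bound at column $i+1$ both involve $l(i,\cdot),$ so the one-sided events across $i$ need not be jointly independent. I would dispatch this via Cauchy--Schwarz,
\[
\P[\bar v^\delta\ \mathrm{compatible}]\le\sqrt{\P[\text{all upper bounds hold}]\cdot\P[\text{all lower bounds hold}]},
\]
with each factor now fully independent across columns (one uses $\tilde S_{L_i}(i,\cdot),$ the other $\tilde S_{L_{i-1}}(i-1,\cdot)$). At the cost of halving the rate to $\lambda_1=\lambda c/2,$ multiplying out produces the claimed bound. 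Independence from $b$ is automatic, since $b$ enters only through the linear extension of $v$ to the strips outside $[-N+\delta,N-\delta]$ (cf.\ Remark \ref{remdiscrete}) and not through the obstacles.

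The main technical obstacle is that the window width $L_i$ is itself random: it depends on the slopes of the unknown path $v,$ so the obstacle variables entering the per-site bound are not pre-selected. This is absorbed thanks to (i) the uniform-in-$L$ character of \eqref{cdl.eq.Lge1}, so no per-site penalty is paid, and (ii) a union bound over the discrete admissible values of $L_i,$ whose cost fits into the prefactor $e^{N\hat C}.$ The hypothesis that $F$ be sufficiently large enters in ensuring that the Chebyshev parameter $\lambda c$ dominates both the Laplace-transform constant $C_0$ and the combinatorial cost from the union bound over $L_i.$
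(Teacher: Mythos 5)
Your overall strategy---apply Lemma \ref{cdl.lem.esti5} at each site, push the resulting constraints through Corollary \ref{corprob}, and multiply by column-wise independence---is the same as the paper's. The one place you diverge is the treatment of the overlap between adjacent columns: the upper bound at site $i$ and the lower bound at site $i+1$ both constrain the column-$i$ sum. The paper groups these two constraints into a single event $B_{\bar v^\delta}(i)$ and uses $\max(a,b)\ge(a+b)/2$ to turn them into one lower bound on $S_{\bar v^\delta}(i)$; since the $B_{\bar v^\delta}(i)$ are independent across $i$, the product estimate is immediate. Your Cauchy--Schwarz factorization $\P(\cap)\le\sqrt{\P(\text{upper})\,\P(\text{lower})}$ is a clean alternative that achieves the same thing, at the harmless cost of halving $\lambda_1$. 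Either bookkeeping is fine.

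However, your handling of the random window width $L_i$ has a genuine gap. You propose a union bound over the discrete admissible values of $L_i$ and justify its cost by appealing to the \emph{uniformity in $L$} of \eqref{cdl.eq.Lge1}. That is the wrong property: a union bound over an unbounded (or even $O(FN)$-sized) family of events whose probabilities are only uniformly bounded diverges, or at best produces an $N\log N$ cost in the exponent, not the $O(N)$ cost your $e^{N\hat C}$ prefactor allows. What actually rescues the union bound is that, once $y$ exceeds the mean, $\P\bigl(\tfrac{1}{L}\tilde S_L \ge y\bigr)$ \emph{decays geometrically in $L$}: since $\tilde S_L$ has $O(\delta L)$ exponential summands, a Chernoff bound with parameter of order $\lambda_0$ gives $\P(\tilde S_L\ge Ly)\le e^{-L(\lambda_0 y/2 - O(\delta))},$ so the sum over $L\ge L_{\min}$ is dominated by its smallest term and contributes $C e^{-c\lambda_0 y}$ at each site. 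Equivalently one can run a supermartingale/maximal-inequality argument for $\sup_{L}\bigl(\tilde S_L - Lr\bigr).$ To be fair, the paper's own proof does not make this step explicit either---it writes $M(\bar v^\delta)$ as though the window were a deterministic functional of the discrete path, when in fact $M(i)$ depends on slopes of the underlying continuous path that $\bar v^\delta$ does not determine---so your proposal has correctly located a real subtlety, but the mechanism you offer to close it does not work as stated.
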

 
The previous estimates bounds the probability of the random obstacle configurations such that a {\em fixed} 
discrete path  is compatible with the random environment. In order to prove that the probability that 
{\em there exists} some  compatible nonnegative path is small, 
we would
have to sum over all possible paths, 
each weighted with the right hand side of the previous estimate.
It is complicated to bound these sums, because the number of possible discrete paths grows faster than 
exponentially in 
$N.$ Fortunately, most of them are extremely unlikely to be compatible. In order to quantify this,
we define an auxiliary probability measure on discrete paths. 

\begin{definition}\label{Ptilde}
\begin{eqnarray*}
\widetilde \P_b[\Delta_d\bar v^\delta]&:=&
\frac{1}{Z^{2N-1}}e^{-\lambda_1\sum_{-N+1}^{N-1} |\Delta_d\bar v^\delta(i)+\bar F|},
\\
Z&:=&\sum_{k=-\infty}^\infty e^{-\lambda_1|\delta k+\bar F|}
\end{eqnarray*}
\end{definition}
The normalisation constant is obtained by summing over all possible discrete paths for fixed boundary
conditions. This is equivalent to summing over all discrete Laplacians. Note that $Z$ is bounded from above
and below by constants independent of $F.$ 

Note that the law of the positive and the negative part 
of $\Delta_d\bar v^\delta(i)+\bar F$ 
 under $\widetilde P$ is that of (discretized) independent exponential 
random variables.
In particular, probabilities of sums of the discrete Laplacians have certain exponential moments and can
be estimated by large deviation techniques.

\begin{cor}\label{cdl.cor.auxmeas}
With $\widetilde P$ as in Def. \ref{Ptilde}, there exists $N_0(\lambda_0,\delta)$ such that
$$
\P_b[\bar v^\delta\  {\rm compatible}]\le e^{\tilde CN} 
\widetilde\P[\Delta_d\bar v^\delta ]
$$
for $N>N_0.$
\end{cor}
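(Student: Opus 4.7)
The plan is to derive the corollary by directly comparing the two sides, starting from Lemma \ref{cdl.lem.prob3} and absorbing the partition function $Z$ into the exponential constant. Recalling the definition of $\widetilde \P$, the claim
$$\P_b[\bar v^\delta\ {\rm compatible}]\le e^{\tilde CN}\,\widetilde\P[\Delta_d\bar v^\delta]$$
reduces, after cancelling the common factor $e^{-\lambda_1\sum_{-N+1}^{N-1}|\Delta_d\bar v^\delta(i)+\bar F|}$, to showing that
$$e^{N\hat C}\le \frac{e^{\tilde C N}}{Z^{2N-1}},\qquad \text{i.e.,}\qquad Z^{2N-1}\le e^{(\tilde C-\hat C)N}.$$
Hence the entire task is to bound $Z$ from above by a constant depending only on $\lambda_0$ and $\delta$ (in particular, independent of $\bar F$ and $N$) and then choose $\tilde C$ large enough.

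To bound $Z=\sum_{k\in\Z}e^{-\lambda_1|\delta k+\bar F|}$, I would pick the integer $k_0\in\Z$ minimising $|\delta k_0+\bar F|$, so that $|\delta k_0+\bar F|\le \delta/2$. Re-indexing by $m=k-k_0$ and using the triangle inequality,
$$Z\le e^{\lambda_1\delta/2}\sum_{m\in\Z}e^{-\lambda_1\delta|m|}=e^{\lambda_1\delta/2}\cdot\frac{1+e^{-\lambda_1\delta}}{1-e^{-\lambda_1\delta}}=:Z_0(\lambda_0,\delta).$$
This bound is uniform in $\bar F$ and $N$, which is exactly the property flagged in the remark following Definition \ref{Ptilde}.

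Combining this with Lemma \ref{cdl.lem.prob3} gives
$$\P_b[\bar v^\delta\ {\rm compatible}]\le e^{N\hat C}e^{-\lambda_1\sum|\Delta_d\bar v^\delta(i)+\bar F|}\le e^{N\hat C}\cdot Z_0^{2N-1}\cdot \widetilde\P[\Delta_d\bar v^\delta].$$
Setting $\tilde C:=\hat C+2\log Z_0+1$, we have $Z_0^{2N-1}e^{N\hat C}\le e^{\tilde C N}$ for every $N\ge N_0(\lambda_0,\delta)$, where $N_0$ is chosen large enough to absorb the factor $Z_0^{-1}$. This yields the desired inequality.

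The proof is essentially bookkeeping: the substantive probabilistic content already lives in Lemma \ref{cdl.lem.prob3}, and the only point requiring a small argument is the uniform bound on the partition function, which is immediate because the summand of $Z$ is a discrete translate of a two-sided geometric sequence. No step presents a real obstacle here, since the exponent $e^{\tilde C N}$ is free to absorb $Z^{2N-1}$ at the cost of enlarging $\tilde C$.
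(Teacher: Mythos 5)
Your proposal is correct and follows essentially the same route as the paper: both proofs start from Lemma~\ref{cdl.lem.prob3}, multiply and divide by $Z^{2N-1}$ to recognize $\widetilde\P[\Delta_d\bar v^\delta]$, and absorb the surplus $e^{N\hat C}Z^{2N-1}$ into $e^{\tilde C N}$. The one point you make explicit that the paper merely asserts (in the remark after Definition~\ref{Ptilde}) is the uniform bound $Z\le Z_0(\lambda_0,\delta)$, which you obtain cleanly by centering the sum at the minimizer of $|\delta k+\bar F|$ and summing the resulting two-sided geometric series; this is a useful clarification but not a different argument.
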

\dem{Proof of Corollary \ref{cdl.cor.auxmeas}:} We suppose that Lemma \ref{cdl.lem.prob3} holds. Then
\begin{eqnarray*}
P_b[\bar v^\delta\  {\rm compatible}]&\le& e^{N \hat C}
e^{-\lambda_1\sum_{-N+1}^{N-1} |\Delta_d\bar v^\delta(i)+\bar F|}
\\ &=& e^{N \hat C}\left(Z^2\right)^{N}Z^{-1}
\frac{1}{Z^{2N-1}}e^{-\lambda_1\sum_{-N+1}^{N-1} |\Delta_d\bar v^\delta(i)+\bar F|}\le
 e^{N \tilde C}\widetilde\P[\Delta_d\bar v^\delta ]
\end{eqnarray*} for $N$ sufficiently large. Here we can choose e.g. 
$$
\tilde C=2\hat C +2\ln(Z).
$$

\dem{Proof of Lemma \ref{cdl.lem.prob3}:}

In order to simplify notation we write
$$
S_{\bar v^\delta}(\omega)(i):= \tilde S_{M(\bar v^\delta)}(\o)(i,\bar v^\delta(i)).
$$
We write the absolute value as sum of positive and negative part.

By Lemma \ref{cdl.lem.esti5}  we get that there exist  
universal positive constants $C_0$ 
such that the fixed discrete path $\bar v^\delta$ 
is compatible only if
\begin{eqnarray*}&&\omega\in\left(\bigcap_{i=-N+1}^{N-2} 
\left(A_{\bar v^\delta,+}(i)\cap A_{\bar v^\delta,-}(i)\right)\right)\cap A_{\bar v^\delta,+}(N-1)\cap A_{\bar v^\delta,-}(-N+1)\\
 A_{\bar v^\delta,+}(i)&:=&\left\{\omega:\ 
C_0\left(\Delta_d\bar v^\delta(i)+\bar F\right)_+ 
\le S_{\bar v^\delta}(\omega)(i)\right\}\\
 A_{\bar v^\delta,-}(i)&:=&\left\{\omega:\ 
C_0\left(\Delta_d\bar v^\delta(i+1)+\bar F\right)_- 
\le S_{\bar v^\delta}(\omega)(i)\right\}\  \\
B_{\bar v^\delta}(i)&:=&
\left(A_{\bar v^\delta,+}(i)\cap A_{\bar v^\delta,-}(i)\right) .
\end{eqnarray*}
 Note that
$$
B_{\bar v^\delta}(i)\subseteq \left\{
S_{\bar v^\delta}(i)\ge 
\frac{C_0}{2}\left(\Delta_d\bar v^\delta (i+1)+\bar F \right)_-
+ \frac{C_0}{2}\left(\Delta_d\bar v^\delta(i)+\bar F \right)_+\right\}
$$
and we estimate with the help of Corollary \ref{corprob} 
for $i\in \{-N+1,\ldots ,N-2\}$
$$
\P(B_{\bar v^\delta}(i))\le  e^{\hat C-\frac{\widehat\lambda_1 \delta}{C_0}\left(\left(
\Delta_d\bar v^\delta(i)+\bar F \right)_++\left(
\Delta_d\bar v^\delta(i+1)+\bar F \right)_-\right)}$$
for constants $\hat C$ and $\widehat\lambda_1$ depending only on $\lambda_0$
but not on $F.$ 

Moreover,
for $i=N-1$ we obtain
$$
\P(A_{\bar v^\delta,+}(N-1))\le  e^{\hat C-\frac{\widehat\lambda_1
\delta}{C_0}
\left(
\Delta_d\bar v^\delta(N-1)+\bar F 
\right) _+}
$$
and for $i=-N+1$ we obtain 
$$
\P(A_{\bar v^\delta,-}(-N+1))\le  e^{\hat C-\frac{\widehat\lambda_1
\delta}{C_0}
\left(
\Delta_d\bar v^\delta(-N+1)+\bar F
\right) _-}
$$

The events $B_{\bar v^\delta}(i)$ 
are independent for different
$i,$ hence
\begin{eqnarray*}
\P_b[\bar v^\delta\  {\rm compatible}]&\le& \P(A_{\bar v^\delta,-}(-N+1))
\P(A_{\bar v^\delta,+}(N-1)) \prod\limits_{i=-N+1}^{N-2} 
\P(B_{\bar v^\delta}(i))
\\ &\le&e^{N \hat C}
e^{-\frac{\widehat \lambda_1 \delta}{C_0}\sum_{-N+1}^{N-1} |\Delta_d\bar v^\delta(i)+\bar F|}.
\end{eqnarray*}
The claim follows now by choosing $\lambda_1=\frac{\widehat\lambda_1 \delta}{C_0}.$

\fdem
\begin{remark}Note that the 1-1-correspondence between second derivatives and paths with
Dirichlet boundary conditions allows us to express each path uniquely through its discrete
Laplacians and thus estimate its probability with the help of the previous lemma.
\end{remark}
As a consequence the discrete Laplacians on average much larger than $-F$ are extremely
unlikely. We will show that nonnegative paths that  cross the "triangle $KN-K|x|$  require such unlikely values
of the discrete Laplacian.

\section{Final Argumentation}
\subsection{Some formulas on discrete path and Comparison of two paths}
In this section, we recall some well known formulas for discrete 
paths and their discrete derivatives.
The proofs are straightforward computations and therefore omitted.

Let us first recall some basic formulas satisfied by  a discrete path $z$ defined in $\Z\times \R$. 

\begin{lem} \label{cdl.lem.formula}
Let us denote 
$\nl z[\ell+1]:=z[\ell+1]-z[\ell]$ and $\nr z[\ell+1]:=z[\ell+1]-z[\ell+2]$. Then for $\ell\in \Z$ we have 
\begin{itemize}
\item[(i)] \begin{align*}
& \nl z[\ell+1]=\Delta_d z[\ell] +\nl z[\ell]=\sum_{i=1}^{\ell}\Delta_d z[i] +\nl z[1]\\
& \nl z[\ell+1]=\Delta_d z[\ell] +\nl z[\ell]=\sum_{i=k}^{\ell}\Delta_d z[i] +\nl z[k].\end{align*}  

\item[(ii)] \begin{align*}
&z[\ell+1]-z[0]=\sum_{i=1}^{\ell}\sum_{j=1}^{i}\Delta_d z[j] +(\ell+1)\nl z[1].\\
&z[\ell+1]-z[k]=\sum_{i=k+1}^{\ell+1}(z[i]-z[i-1])=\sum_{i=k+1}^{\ell}\sum_{j=k+1}^{i}\Delta_d z[j]  +(\ell+1-k)\nl z[k+1].
\end{align*}  
\item[(iii)] \begin{align*}
&\nr z[0]=\Delta_d z[1] +\nr z[1]=\sum_{i=1}^{\ell }\Delta_d z[i] +\nr z[\ell ],\\
& \nr z[k]=\Delta_d z[k+1] +\nr z[k+1]=\sum_{i=k+1}^{\ell}\Delta_d z[i] +\nr z[\ell].
\end{align*} 
\item[(iv)]  \begin{align*}
&z[0]-z[\ell+1]=\sum_{i=0}^{\ell-1}\sum_{j=i+1}^{\ell}\Delta_d z[j] +(\ell+1)\nr z[\ell]\\
&z[k]-z[\ell+1]=\sum_{i=k}^{\ell} (z[i]-z[i+1])=\sum_{i=k}^{\ell-1}\sum_{j=i+1}^{\ell}\Delta_d z[j] +(\ell+1-k)\nr z[\ell].
\end{align*}
\item[(v)] $$ \nl z[\ell+1]=-\nr z[\ell]$$
\end{itemize}
\end{lem}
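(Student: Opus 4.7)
The plan is to derive all five identities from two elementary observations: the definition $\Delta_d z[\ell] = z[\ell+1]-2z[\ell]+z[\ell-1]$ rewrites as a telescoping relation between consecutive forward (or backward) differences, and any sum of consecutive differences collapses by telescoping. Nothing probabilistic or analytic is involved; the proof is purely algebraic bookkeeping on sequences indexed by $\Z$.

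First, for (i), I would simply rearrange the definition of the discrete Laplacian:
\[
\Delta_d z[\ell]=\bigl(z[\ell+1]-z[\ell]\bigr)-\bigl(z[\ell]-z[\ell-1]\bigr)=\nl z[\ell+1]-\nl z[\ell],
\]
which gives the one-step recursion $\nl z[\ell+1]=\Delta_d z[\ell]+\nl z[\ell]$. Iterating this recursion from index $1$ (or from an arbitrary index $k$) yields the stated sum formulas. For (iii), I would repeat the same argument with $\nr z[\ell]:=z[\ell]-z[\ell+1]$; observe that $\Delta_d z[\ell+1]=\nr z[\ell]-\nr z[\ell+1]$, so the same one-step recursion and iteration produces the backward-gradient version.

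Next, for (ii) and (iv), I would start from the telescoping identity
\[
z[\ell+1]-z[k]=\sum_{i=k+1}^{\ell+1}\bigl(z[i]-z[i-1]\bigr)=\sum_{i=k+1}^{\ell+1}\nl z[i],
\]
and then substitute the formula from (i) for each $\nl z[i]$ in terms of $\nl z[k+1]$ and the intervening discrete Laplacians. Exchanging the order of summation in the resulting double sum produces the double-sum expression in the statement. The analogous computation with $\nr z$ gives (iv).

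Finally, identity (v) is immediate from the definitions, since $\nr z[\ell]=z[\ell]-z[\ell+1]=-(z[\ell+1]-z[\ell])=-\nl z[\ell+1]$. There is no real obstacle in any of these steps; the only mild care needed is keeping the summation endpoints straight when passing between the one-index and two-index sums in (ii) and (iv), which is why the authors chose to omit the calculations as routine.
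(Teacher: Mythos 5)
Your proof is correct, and the paper itself omits the proof of this lemma as a ``straightforward computation,'' so your telescoping argument is exactly the routine verification the authors have in mind. The one-step recursions $\nl z[\ell+1]=\Delta_d z[\ell]+\nl z[\ell]$ and $\nr z[\ell]=\Delta_d z[\ell+1]+\nr z[\ell+1]$, together with summing and interchanging the order of the resulting double sums, give all five identities with the endpoints as stated.
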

\bigskip

Let us now define what we mean by "crossing."
\begin{definition} 
Let $z_1$ and $z_2$ be two given paths in $\Z\times \R$. We say that $z_1$ cross $z_2$  if and only if there exists $i\in \Z$ such that   $z_1[i]\ge z_2[i] $ and $z_1[i+1]\le z_2[i+1]$.
\end{definition}



We will apply this to the discrete path $\bar v^\delta$ and the triangle $z_K(i):=NK-K|i|.$

\subsection{Proof of Theorem \ref{mainthm}}
First we state  a trivial fact for discrete sums.
\begin{lem}
Let $a_j$ be nonnegative numbers, then 
\begin{equation}\label{averaging}
\sum_{i=1}^N\sum_{j=i}^Na_j=\sum_{i=1}^N j a_j\le N \sum_{i=1}^N a_j\
\end{equation}
\end{lem}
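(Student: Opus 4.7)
The plan is to prove the identity by swapping the order of summation in the double sum, and then use the trivial bound $j \le N$ to get the inequality. Concretely, I would write
\[
\sum_{i=1}^N \sum_{j=i}^N a_j \;=\; \sum_{(i,j):\, 1\le i\le j\le N} a_j \;=\; \sum_{j=1}^N \sum_{i=1}^j a_j \;=\; \sum_{j=1}^N j\, a_j,
\]
which establishes the equality. For the inequality, since $a_j \ge 0$ and $j \le N$ for every $j$ in the range of summation, we have $j\,a_j \le N\,a_j$, so $\sum_{j=1}^N j\,a_j \le N \sum_{j=1}^N a_j$.

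There is no real obstacle here; the only thing to be careful about is that the order-swap requires nonnegativity of the $a_j$ only for the final step (the equality itself holds for arbitrary reals by Fubini on a finite index set). I would state this as a one-line justification rather than separate cases. The lemma is presented as a bookkeeping fact that will later be applied to sums of discrete Laplacians along the path $\bar v^\delta$, where the roles of $i$ and $j$ correspond to running indices from Lemma \ref{cdl.lem.formula}, so the only thing that matters for later use is the clean bound $\sum_i \sum_{j\ge i} a_j \le N \sum_j a_j$.
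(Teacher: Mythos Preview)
Your argument is correct and is exactly the natural justification: interchange the order of summation over the triangle $\{1\le i\le j\le N\}$ to get $\sum_{j=1}^N j\,a_j$, then use $j\le N$ and $a_j\ge 0$. The paper itself does not supply a proof at all---it merely introduces the lemma as ``a trivial fact for discrete sums''---so there is nothing to compare against; your write-up is the intended one-line verification.
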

We will show that paths that remain nonnegative but cross the triangle $z_k$ require values
of the average discrete Laplacian which are very unlikely under $\widetilde P.$ In order to do so, we
distinguish cases: Either the path is above the triangle near one of the two endpoints of the interval
$[-N,N]$ and crosses at the interior, or it crosses at $N$ or $-N.$ In both cases, this implies information on the
gradient.  Note that the nonnegativity of the original subsolution does 
not imply the nonnegativity of 
the discretized path, but only that the discretized path is larger than $-\delta FN,$ $\delta$ times
the minimal possible gradient. In particular, it implies that the
terminal value
$b$ of the discretized path is in $[-\delta FN,0].$

{\bf Notation:} As only discrete paths appear in the following estimates,  we will write $v[i]$ for $\bar v^\delta[i]$ 
 In order to simplify notation.

If $-\grad^r v[-N]\le K,$ then by Lemma \ref{cdl.lem.formula}
$$
v[0]-v[-N]=\sum_{i=-N+1}^{-1}\sum_{j=-N+1}^{i}\Delta_d v[j] -N\nr v[-N].$$
Since $v[-N]=0$ and rewriting the double sum the right way, it follows that 
$$
-FN\le v[0]\le NK+\sum_{i=-N+1}^{-1}(-i)(\Delta_d v[i]).
$$
After adding and subtracting $\bar F$ in each term in the summation 
$$
-FN\le  NK+\sum_{i=-N+1}^{-1}(-i)(\Delta_d v[i]+\bar F) -\bar F \frac{N(N-1)}{2}.
$$
 so, invoking (\ref{averaging}) it follows that
$$
\bar F\frac{N(N-1)}{2}-(F+K)N\le 2(N-1)\sum_{i=-N+1}^{N-1}(\Delta_d v[i]+\bar F)_+.
$$
By definition of $\bar F$, we have $$ \bar F\ge  F(1-2(\delta+\eps))-(1+2\delta).$$
Therefore for $\eps$ small, says $ \eps \le \delta$ and $F$ such that   $F\ge 2\frac{1+2\delta}{1-8\delta}$  we achieve $$\bar F\ge \frac{F}{2}.$$ 
 Whence 

$$
F\frac{N(N-1)}{4}-(F+K)N\le 2(N-1)\sum_{i=-N+1}^{N-1}(\Delta_d v[i]+\bar F)_+.
$$
This implies that
for $N$ large and $K$ fixed     
$$
\frac{1}{2(N-1)}\sum_{i=-N+1}^{N-1}(\Delta_d v[i]+\bar F)_+ \ge \frac{1-2\delta}{8}F.
$$ 
As the $(\Delta_d v[i]+\bar F)_+$ are independent  random variables under the auxiliary
probability measure $\widetilde \P$ defined in Def. \ref{Ptilde} which have exponential moments 
bounded as in (\ref{cdl.eq.L=1}),  we can derive an upper bound
for the large deviations principle:
(For the basic form of the large deviations principle needed, see
e.g \cite{Grimmett} Ch. 5.11)
Let
$$
{\mathcal I}(F)=\frac{F}{\mu}-1+\ln\left(\frac{\mu}{F}\right),
$$where $\mu:=\lambda_0^{-1}$ with $\lambda_0$ as in Lemma \ref{cdl.lem.prob1}. (I.e. for exponential random variables $\mu$ is the expectation of $(\Delta_d v[i]+\bar F)_+$ under $\widetilde \P.$ Note that  $\mu$ 
is decreasing in $\lambda_0.$)
Then, by the large deviations principle, 
for any $\eta>0$ there exists $N_0\in \N$ such that for all $N\ge N_0$
$$
\widetilde \P\left( \frac{1}{2(N-1)}\sum_{i=-N+1}^{N-1}(\Delta_d v[i]+\bar F)_+ 
\ge \frac{(1-2\delta)}{8}F\right)\le e^{-N\left(C+{\mathcal I}\left(\frac{(1-2\delta)F}{8}\right)-\eta\right)}.
$$where $C$ is the constant in the bound (\ref{cdl.eq.L=1}). ($C=1$ for exponential random variables.)
Now choose 
$F$ sufficiently large such that
$$
e^{\widetilde C C-{\mathcal I}\left(\frac{(1-2\delta)F}{8}\right)}<1,
$$
where the constants are defined in Lemma \ref{cdl.lem.prob3}.

Then  there exists a constant $C_3$ depending on 
$\lambda_0$ and $\delta$ such that for $N$ sufficiently large
$$
\P({\rm case\ 1})\le e^{-C_3N}.
$$

The case $\grad^lv[N]\ge -K$ is done in a similar way.

Second case:
$-\grad^r v[-N]>K,   \grad^lv[N]<-K.$  This implies that the path has to cross the triangle
inside the interval $[-N,N].$
Suppose the path crosses $z_K$ on $[-N,0],$ the other
case is follows by symmetry. Then there exists $N_1, \, -N<N_1<0,$ such that $-\grad ^rv[N_1]\le K$ and
$v[N_1]\le KN.$ Then by Lemma \ref{cdl.lem.formula}
$$
v[N]-v[N_1]=\sum_{i=N_1+1}^{N-1}\sum_{j=N_1+1}^{i}\Delta_d v[j] -(N-N_1)\nr v[N_1],
$$
so
$$
-FN\le v[N]\le 2KN+KN+\sum_{i=N_1+1}^{N-1}\sum_{j=N_1+1}^{i}(\Delta_d v[j]+\bar F) -\bar F \frac{(N-N_1)(N-N_1-1)}{2},
$$
which implies
$$
\bar F \frac{N(N-1)}{2}-(F+3K)N\le \sum_{i=N_1+1}^{N-1}\sum_{j=N_1+1}^{i}(\Delta_d v[j]+\bar F)
\le 2(N-1)\sum_{i=-N+1}^{N-1}(\Delta_d (v[i])+\bar F)_+,
$$i.e. for $N$ sufficiently large
$$
\frac{1}{2(N-1)}\sum_{i=-N+1}^{N-1}(\Delta_d v[i]+\bar F)_+ \ge \frac{(1-2\delta)F}{4}.
$$
Now we can repeat the probabilistic argument from the first case.

Finally, we sum over all possible $-FN$values of the the terminal
condition
$b.$ This sum grows linearly in $N,$ hence using the exponential decay
of the probabilities we obtain
 that there exists 
$C_4(\delta,\lambda_0)$ and $F_0(\delta,\lambda_0)$ such that for
$F>F_0$ 
$$
\P\big(\omega:\ \bar v^\delta 
\mbox{\  compatible\ and\  }
\bar v^\delta\ {\rm crosses}\ z_K  \big)\le e^{-C_4N}.
$$

Now we conclude with Lemma \ref{comparisonlemma}.
 
\fdem

\subsection{Proof of Corollary}

Define $v^N$ as the  solution of the initial-boundary value problem
\begin{eqnarray*}
\frac{\partial v^N}{\partial t}&=& v^N_{xx}(x,t) + \tilde f(x,v^N(x,t)) +F \quad \text{ in } \quad (-N+\delta,N-\delta), \\
v^N(-N,t)&=&u=v^N(N,t)=0\\
v^N(x,0)&=&0,
\end{eqnarray*}
and let $u(x,t)$ solve \ref{cdl.eq.interface}. The comparison principle for parabolic equations implies that
 $v^N(x,t)\le u(x,t)$ for  $x\in [-N-\delta,N+\delta],\ t>0.$  
 
 Moreover, $v^N(x,t)\nearrow v^N_{\rm stat}(x)$ as $t \to \infty ,$ where
 $v_{\rm stat}^N(x)$ is a stationary solution of the Dirichlet problem. 
 
 Note that $\partial_tv^N(x,t)\ge 0$ as
 $\partial_tv^N(x,0)\ge 0,$ and the time derivative $w:=\partial_tv^N$ solves 
$$
\partial_t w=\Delta w+V(x)w,
$$where the potential $V(x)=\frac{\partial f}{\partial u}(x,v^N(x,t))$
is bounded on compact subsets of $\R^N.$ (Note that $\omega$ is a
fixed
parameter here. $v^N\le FN^21_{[-N,N]},$so only obstacles within 
$[-N,N]\times FN^2$ can occur, but these are bounded for $\omega$ fixed.) 

Now a  linear parabolic PDE with sufficiently regular potential
$V(x)$
and nonnegative initial condition remains nonnegative: $\tilde w=
e^{-t\|V\|_\infty}w$ solves 
$$
\partial_t \tilde w=\Delta \tilde w+\tilde V(x)w,\quad \tilde V\le 0
$$ with initial condition $\tilde w\ge 0.$ So the classical parabolic
comparison principle (\cite{Nirenberg}) implies $\tilde w \ge 0.$
 
 By Thm. \ref{mainthm} and the first Borel-Cantelli Lemma (see e.g.
\cite{Grimmett}), 
 $$\P\left(\omega: v_{\rm stat}^N(0)\le 
KN\ {\rm for\  infinitely\ many\ }N\right)=0,$$ so there exist almost surely 
 arbitrarily large $N$ such that
 $$
\liminf_{t\to \infty}  u(0,t)\ge \lim_{t\to\infty} v^N(0,t)=v^N_{\rm
stat}(0)\ge KN,
 $$which implies
\begin{equation}\label{nopin}
 \liminf_{t\to \infty} u(0,t,\omega)=+\infty
\end{equation} with probability 1.
 By the comparison principle, this contradicts the existence
 of a global nonnegative stationary solution.

Moreover, by arguments as in Lemma \ref{comparisonlemma},
(\eqref{nopin}) 
holds for $x\in [-1,1].$ 
As the distribution of the obstacles is invariant
under translations in $x$-direction,  (\eqref{nopin}) holds
for $x\in \R.$


\end{document}